\documentclass{article}
\usepackage{amsmath,amssymb}
\usepackage{enumerate}

\newtheorem{theorem}{Theorem}[section]

\newtheorem{proposition}[theorem]{Proposition}

\newtheorem{remark}[theorem]{Remark}

\newcommand{\Z}{\mathbb{Z}}

\renewcommand{\ker}{\operatorname{Ker}}
\newcommand{\id}{\operatorname{id}}
\newcommand{\Id}{\operatorname{Id}}

\newcommand{\aut}{\operatorname{Aut}}

\newcommand{\ord}[1]{\vert #1\vert}

\newcommand{\soc}{\operatorname{Soc}}

\newenvironment{proof}{\par\noindent{\bf Proof.}}{$\qed$\par\bigskip}
\newenvironment{modproof}{\par\noindent{\bf Proof.}}{\par\bigskip}
\newcommand{\qed}{\enspace\vrule  height6pt  width4pt  depth2pt}

\usepackage[left=2.5cm,top=2.5cm,right=1.5cm,bottom=2.5cm]{geometry}

\begin{document}

\title{Classification of braces of order $p^3$}
\author{David Bachiller}
\date{}
\maketitle

\begin{abstract}
A classification up to isomorphism of all left braces of order $p^3$, where $p$ is any prime number,  
is given. To this end, we first classify all the left braces of order
$p$ and $p^2$, and then we construct explicitly the hypothesis required in
\cite[Corollary~D]{NirBenDavid}
to build multiplications of left braces.
\end{abstract}

{\bf Keywords:} Brace, Bijective 1-cocycle, Radical ring, Yang-Baxter equation, Involutive non-degenerate set-theoretical solution

{\bf MSC:} 81R50, 16T25, 20D15

\section{Introduction}
Braces are algebraic structures introduced by Rump in \cite{Rump1} in connection with his work on
the set-theoretic solutions of the Yang-Baxter equation.
A \emph{left brace} is a set $B$ with 
two operations, $+$ and $\cdot$, such that $(B,+)$ 
is an abelian group, $(B,\cdot)$ is a group, and these two operations are related by what is called the 
brace property: $$a\cdot (b+c)+a=a\cdot b+a\cdot c\text{, for every }a,b,c\in B.$$ Right braces are defined 
analogously, but replacing the last property by $(b+c)\cdot a+a=b\cdot a+c\cdot a$. A left and right brace is simply called
a brace.

The main motivation to study this structure is its relation with a particular type of solutions of the Yang-Baxter equation, 
the non-degenerate involutive set-theoretic solutions, in the sense of \cite[Section~3]{Cedo}.
To see this, given a left brace $B$, define $\lambda_a(b):=a\cdot b-a$ for every $a,b\in B$. 
It is not difficult to prove that for all $a\in B$, $\lambda_a$ is an automorphism of the additive group of $B$, and 
that the map $\lambda:(B,\cdot)\to\aut(B,+)$, $a\mapsto\lambda_a$, is a morphism of groups.
Then, by \cite[Lemma~4.1(iii)]{Cedo}, the map $s$ defined by
$$
\begin{array}{cccc}
s:&	B\times B&	\longrightarrow&	B\times B\\
		&	(a,b)&	\longmapsto&	\left(\lambda_a(b),~\lambda^{-1}_{\lambda_a(b)}(a)\right),
\end{array}
$$
is a non-degenerate involutive set-theoretic solution of the Yang-Baxter equation;
this $s$ is called the \emph{associated solution} to the left brace $B$. So, in fact, for any brace structure that we can determine,
we are also computing a solution of the Yang-Baxter equation. This is one of the fundamental reasons for 
desiring a classification of left braces, but there are other results relating this structure and the 
Yang-Baxter equation in a fundamental way; 
see \cite{Cedo} for a good introduction to braces and their relation to the 
Yang-Baxter equation. 

Another important fact about braces is their relation with other algebraic structures.
For instance, there is a bijective correspondence between left braces and groups with a bijective 1-cocycle with respect
to a left action (see \cite[Remark~2]{Rump1}). There is also a bijective correspondence between two-sided braces and radical rings
(see \cite[page~159]{Rump1}).

For all these reasons, a classification of all the left braces of finite order is wanted. 
It is known that the multiplicative group of any finite left
brace is solvable. Then, it is natural to begin studying the case of multiplicative group equal
to a $p$-group.
The only previous effort in this direction is \cite{Rump}, where a complete classification of left braces with 
additive group isomorphic to $\Z/(p^n)$, where $p$ is any prime and $n$ is any positive integer, is accomplished.

The aim of this paper is to give a complete classification of all the left braces of order $p^3$, for any prime $p$. 
We divide this problem in two parts, depending on the socle, and in each of them we use different techniques. 
 The \emph{socle} of a left brace $B$, $\soc(B)$, is defined by
$$
\soc(B):=\{a\in B\mid \lambda_a=\id\}.
$$
In other words, it is the kernel of the morphism of groups $\lambda: (B,\cdot)\to\aut(B,+)$, $a\mapsto \lambda_a$.
On one side, we consider braces with non-trivial socle. In this case, we first classify the brace $B/\soc(B)$, which is
of less order than $B$, and then we use the result \cite[Corollary~D]{NirBenDavid}, that gives all the possible ways to extend 
the brace structure of $B/\soc(B)$ to that of $B$. On the other side, we consider braces with $\soc(B)=\{0\}$. In this case,
$\lambda: (B,\cdot)\to\aut(B,+)$ is a monomorphism, and $(B,\cdot)$ is isomorphic to a subgroup $M\leq\aut(B,+)$. 
Thus a brace structure is determined by 
a bijective map $\pi:M\to (B,+)$ such that $\pi(a\cdot b)=\pi(a)+a(\pi(b))$ for all $a,b\in M\leq\aut(B,+).$

The article is organized as follows. In section 2, we present and prove the preliminar results that 
we will need later on. Specifically, we prove a version of the result \cite[Corollary~D]{NirBenDavid} in 
terms of braces, and classify braces of order $p$ and $p^2$. 
Then, we state the main theorem of the paper,
which consists of a complete list of all the brace structures of order $p^3$ up to isomorphism. The final 
sections are completely devoted to the proof of this result.

\section{Preliminar results}
First we need the following theorem, based on the work of Ben David \cite{NirBenDavid}. It is a reformulation of 
\cite[Corollary~D]{NirBenDavid} in terms of braces. This theorem reduces the classification of braces of a given 
finite order to the classification of braces of smaller order plus finding two morphisms $h$ and $\sigma$ with some 
properties described in the hypothesis of the theorem.

\begin{theorem}\label{BenDavid}
Let $H$ be an abelian group and $B$ be a left brace. Let $\sigma:(B,\cdot)\to \aut(H,+)$ be an injective morphism, 
and $h:(H,+)\to (B,+)$ be a surjective morphism. Suppose that they satisfy $h(\sigma(g)(m))=\lambda_g(h(m))$ 
for all $g\in B$ and $m\in H$. Then, the multiplication over $H$ given by $$x\cdot y:=x+\sigma(h(x))(y)~~\forall x,y\in H,$$ 
defines a structure of left brace on $H$ such that $h$ is a morphism of left braces, 
$\soc(H)=\ker(h)$ and $H/\soc(H)\cong B$ as left braces.

Two of these structures, determined by $\sigma$, $h$ and $\sigma'$, $h'$ respectively, are isomorphic if
and only if there exists an $F\in \aut(H,+)$ such that
$$
\sigma'(h'(m))=F^{-1}\circ\sigma(h(F(m)))\circ F,
$$
for all $m\in H$.

Conversely, suppose that $G$ is a left brace. Then, the map $\sigma:(G/\soc(G),\cdot)\to\aut(G,+)$ induced by
the map $\lambda: (G,\cdot)\to\aut(G,+)$,
and the natural map $h:G\to G/\soc(G)$ satisfy the above properties.
\end{theorem}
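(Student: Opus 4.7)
The plan is to verify each claim in turn: (a) the defined multiplication makes $H$ into a left brace, (b) $h$ becomes a morphism of left braces with $\soc(H)=\ker(h)$ and $H/\soc(H)\cong B$, (c) the isomorphism criterion, and (d) the converse. Throughout, the key bridge is the compatibility hypothesis $h\circ\sigma(g)=\lambda_g\circ h$, which translates addition data on $H$ into multiplication data on $B$.

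For (a), the brace identity $x\cdot(y+z)+x=x\cdot y+x\cdot z$ is immediate because $\sigma(h(x))\in\aut(H,+)$. The real content is associativity. Expanding $(x\cdot y)\cdot z$ requires simplifying $h(x+\sigma(h(x))(y))$; since $h$ is additive, this equals $h(x)+h(\sigma(h(x))(y))$, and the compatibility hypothesis rewrites this as $h(x)+\lambda_{h(x)}(h(y))=h(x)\cdot h(y)$. Because $\sigma$ is a group morphism, the outer factor in $(x\cdot y)\cdot z$ then becomes $\sigma(h(x))\sigma(h(y))$, and a direct expansion of $x\cdot(y\cdot z)$ produces the same expression. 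The multiplicative identity is $0_H$ (recall that in any left brace the additive and multiplicative identities coincide, since the brace property with $c=0$ gives $a\cdot 0=a$), and a two-sided inverse of $x$ is $-\sigma(h(x))^{-1}(x)$.

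For (b), the identity $h(x\cdot y)=h(x)\cdot h(y)$ is exactly the computation made during associativity; linearity of $h$ gives the additive part. From the definition, $\lambda^H_x(y)=x\cdot y-x=\sigma(h(x))(y)$, so $\lambda^H_x=\sigma(h(x))$. Injectivity of $\sigma$ then yields $x\in\soc(H)\iff h(x)=0$, and the first isomorphism theorem applied to the surjective brace morphism $h$ gives $H/\soc(H)\cong B$. For (c), assume $F\in\aut(H,+)$ intertwines the two multiplications via $F(x\cdot_{\sigma'}y)=F(x)\cdot_{\sigma}F(y)$; expanding both sides, canceling $F(x)$, and varying $y$ forces $F\sigma'(h'(x))F^{-1}=\sigma(h(F(x)))$, which is the stated formula with $m=x$. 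Running the same calculation backwards shows that any additive $F$ satisfying the formula respects $\cdot$.

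The converse statement (d) is essentially definitional: $\ker\lambda=\soc(G)$, so $\lambda$ factors through an injective $\sigma$ on $G/\soc(G)$, the projection $h$ is an additive surjection, and the required compatibility reduces to the general fact that for any morphism of braces $\phi$, one has $\phi(\lambda_a(b))=\lambda_{\phi(a)}(\phi(b))$ (a one-line consequence of $\phi$ being both additive and multiplicative). The main obstacle in the whole proof is organizational rather than technical: keeping the identity $h\circ\sigma(g)=\lambda_g\circ h$ available at precisely the right moment in the associativity check, so that $h(x+\sigma(h(x))(y))$ collapses to $h(x)\cdot h(y)$ in $B$; once that step is clean, every remaining assertion is routine bookkeeping.
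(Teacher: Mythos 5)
Your proposal is correct and follows essentially the same route as the paper: verify associativity by using additivity of $h$ and the compatibility $h\circ\sigma(g)=\lambda_g\circ h$ to collapse $h(x+\sigma(h(x))(y))$ to $h(x)\cdot h(y)$, check the identity $0$ and the inverse $-\sigma(h(x))^{-1}(x)$, read off $\lambda^H_x=\sigma(h(x))$ to get $\soc(H)=\ker(h)$ and $H/\soc(H)\cong B$, and obtain the isomorphism criterion by expanding $F(x\odot y)=F(x)\cdot F(y)$ and cancelling. The converse you argue via $\phi(\lambda_a(b))=\lambda_{\phi(a)}(\phi(b))$, which is exactly the "easy to check" step the paper leaves to the reader.
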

\begin{proof}
We have to check that $(H,\cdot)$ is a group and
satisfies the left brace property. First of all, for the associativity, if $u,v,w\in H$,
$$
\begin{array}{rl}
(u\cdot v)\cdot w&=u\cdot v+\sigma(h(u\cdot v))(w)\\
&=u\cdot v+\sigma(h(u+\sigma(h(u))(v)))(w)\\
&=u\cdot v+\sigma(h(u)+h(\sigma(h(u))(v)))(w)\\
&=u\cdot v+\sigma(h(u)+\lambda_{h(u)}(h(v)))(w)\\
&=u\cdot v+\sigma(h(u)\cdot h(v))(w)\\
&=u\cdot v+\left(\sigma(h(u))\circ \sigma(h(v))\right)(w)\\
&=u+\sigma(h(u))(v)+\left(\sigma(h(u))\circ \sigma(h(v))\right)(w)\\
&=u+\sigma(h(u))(v+\sigma(h(v))(w))\\
&=u+\sigma(h(u))(v\cdot w)=u\cdot (v\cdot w).
\end{array}
$$

Next, it is easy to check that $u\cdot 0=0\cdot u=u$, so 0 is the multiplicative neutral element. 
To check that all the elements have an inverse, given $u\in H$, consider $-\sigma(h(u)^{-1})(u)$; 
this element is the inverse of $u$ because
$$
\begin{array}{rl}
[-\sigma(h(u)^{-1})(u)]\cdot u&=-\sigma(h(u)^{-1})(u)+\sigma(h(-\sigma(h(u)^{-1})(u)))(u)\\
&=-\sigma(h(u)^{-1})(u)+\sigma(-h(\sigma(h(u)^{-1})(u)))(u)\\
&=-\sigma(h(u)^{-1})(u)+\sigma(-\lambda_{h(u)^{-1}}(h(u)))(u)\\
&=-\sigma(h(u)^{-1})(u)+\sigma(-h(u)^{-1}h(u)+h(u)^{-1})(u)\\
&=0.
\end{array}
$$
A similar computation shows that $u\cdot [-\sigma(h(u)^{-1})(u)]=0$. Finally, we prove the brace property:
$$
\begin{array}{rl}
u(v+w)+u=&u+\sigma(h(u))(v+w)+u=u+\sigma(h(u))(v)+\sigma(h(u))(w)+u\\
=&uv+uw,
\end{array}
$$
for all $u,v,w\in H$.

We use now that $\sigma$  is injective to determine the socle.
In the brace $(H,+,\cdot)$ that we have just defined, the lambda
maps $\lambda_u$, $u\in H$, coincide with the maps $\sigma(h(u))$
because $\lambda_u(v)=u\cdot
v-u=u+\sigma(h(u))(v)-u=\sigma(h(u))(v)$. Then, 
$\lambda_u=\id$ if and only if $\sigma(h(u))=\id$, which is equivalent to $h(u)=0$ by the injectivity of $\sigma$.
Then, $\soc(H)=\ker(h)$. Finally, note that 
$h(u\cdot v)=h(u+\sigma(h(u))(v))=h(u)+h(\sigma(h(u))(v))=h(u)+\lambda_{h(u)}(h(v))=h(u)\cdot h(v)$
for all $u,v\in H$. Hence $h$ is a morphism of left braces.
Thus, the surjectivity of $h$ implies $H/\soc(H)\cong B$ as left braces.

Let $\sigma':(B,\cdot)\to\aut(H,+)$ be an injective morphism, and $h':(H,+)\to (B,+)$ be a surjective
morphism such that $h'(\sigma'(g)(m))=\lambda_g(h'(m))$, for all $g\in B$ and $m\in H$. Define
$$
x\odot y:=x+\sigma'(h'(x))(y),~\forall x,y\in H.
$$
Suppose that the left braces $(H,+,\cdot)$ and $(H,+,\odot)$ are isomorphic. Then there exists 
an $F\in\aut(H,+)$ such that $F(x\odot y)=F(x)\cdot F(y)$, for all $x,y\in H$. Hence
$$
F(x+\sigma'(h'(x))(y))=F(x)+\sigma(h(F(x)))(F(y)),
$$
that is
$$
F(x)+F(\sigma'(h'(x))(y))=F(x)+\sigma(h(F(x)))(F(y)).
$$
Therefore
$$
\sigma'(h'(x))(y)=F^{-1}(\sigma(h(F(x)))(F(y))=(F^{-1}\circ\sigma(h(F(x)))\circ F)(y).
$$
Thus $\sigma'(h'(x))=F^{-1}\circ\sigma(h(F(x)))\circ F$ for all $x\in H$. Conversely, 
any automorphism $F$ of $(H,+)$ such that
$$
\sigma'(h'(x))=F^{-1}\circ\sigma(h(F(x)))\circ F,\forall x\in H
$$
is an isomorphism of left braces $F:(H,+,\odot)\to (H,+,\cdot)$.

The last part of the result is easy to check.
\end{proof}

\begin{remark}
There are two special uses of the isomorphism condition 
$\sigma'(h'(m))=F^{-1}\circ\sigma(h(F(m)))\circ F$
that will be useful later to simplify some cases.
One way to use it is to change the representation $\sigma$ by a conjugate representation, 
taking into account that $h$ changes to $h'=h\circ F$. So conjugate representations give rise to 
isomorphic braces, with the appropriate change of $h$.

Another way to use this condition is to find an $F$ that commutes with $\sigma(g)$ for all $g$. Then, we can use this $F$
to modify $h$ keeping the same $\sigma$.
\end{remark}

\newcounter{nx}

Our aim is to classify all braces of order $p^3$. If we want to apply the last theorem, we need 
the classification of braces of order $p$ and $p^2$; this is done in the next proposition. We also have
to know the structure of their multiplicative group.

\begin{remark}
Throughout the paper, multiplication without a dot denotes the usual ring multiplication 
over $\Z/(p^n)$, or the usual multiplication of matrices. Dots are always used to denote left brace multiplications.
\end{remark}

\begin{proposition}\label{bracesp2}
A complete list of braces $G$ of order $p$ and $p^2$ up to isomorphism, classified with respect to their 
additive groups, is the following:
\begin{itemize}
 \item Additive group isomorphic to $\Z/(p)$:
 \begin{enumerate}[(i)]
 \item $$x_1\cdot x_2:=x_1+x_2,~\text{ and then }~(G,\cdot)\cong \Z/(p).$$
 \setcounter{nx}{\value{enumi}}
 \end{enumerate} 
 \item Additive group isomorphic to $\Z/(p^2)$:
 \begin{enumerate}[(i)] 
 \setcounter{enumi}{\value{nx}}
 \item $$x_1\cdot x_2:=x_1+x_2,~\text{ and then }~(G,\cdot)\cong \Z/(p^2);$$
 \item $$x_1\cdot x_2:=x_1+x_2+p x_1 x_2,\text{ and then }$$
 $$(G,\cdot)\cong\left\{
 \begin{array}{cl}
 \Z/(2)\times\Z/(2),&~p=2\\
 \Z/(p^2),&~p\neq 2
 \end{array}
 \right.$$
 \setcounter{nx}{\value{enumi}}
 \end{enumerate}
 
 \item Additive group isomorphic to $\Z/(p)\times\Z/(p)$:
 \begin{enumerate}[(i)]
 \setcounter{enumi}{\value{nx}} 
 \item $$
 \begin{pmatrix}
 x_1\\
 y_1\\
 \end{pmatrix}
 \cdot 
 \begin{pmatrix}
 x_2\\
 y_2\\
 \end{pmatrix}:=
 \begin{pmatrix}
 x_1+x_2\\
 y_1+y_2\\
 \end{pmatrix},~\text{ and then }~(G,\cdot)\cong \Z/(p)\times \Z(p);
 $$
 \item $$\begin{pmatrix}x_1\\y_1\end{pmatrix}\cdot \begin{pmatrix}x_2\\y_2\end{pmatrix}:=
 \begin{pmatrix}x_1+x_2+y_1 y_2\\y_1+y_2\end{pmatrix},\text{ and then }$$
 $$(G,\cdot)\cong\left\{
 \begin{array}{cl}
 \Z/(4),&~p=2\\
 \Z/(p)\times\Z/(p),&~p\neq 2
 \end{array}
 \right.$$
 \end{enumerate}
\end{itemize}

\end{proposition}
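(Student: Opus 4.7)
The plan is to parameterize, for each possible additive group, all brace structures through the homomorphism $\lambda:(B,\cdot)\to\aut(B,+)$, $a\mapsto\lambda_a$, using that $(B,\cdot)$ is a $p$-group and so $\im(\lambda)$ sits in a Sylow $p$-subgroup of $\aut(B,+)$. The multiplication is then recovered from $a\cdot b=a+\lambda_a(b)$, and the remark following Theorem~\ref{BenDavid} lets me pass to a conjugate of $\sigma$ (hence of $\lambda$) and to rescale via additive automorphisms, cutting down the parameter count.

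For order $p$, $\aut(\Z/(p),+)=(\Z/(p))^\times$ has order $p-1$ coprime to $p$, so $\lambda$ is trivial and $a\cdot b=a+b$, giving (i). For order $p^2$ with additive group $\Z/(p^2)$, $\aut$ has order $p(p-1)$ with unique Sylow $p$-subgroup $\langle 1+p\rangle\cong\Z/(p)$; hence $\lambda_a=1+c(a)p$ for a homomorphism $c:(B,\cdot)\to\Z/(p)$, and $c\equiv 0$ gives the trivial brace (ii). If $c\not\equiv 0$, then $\ker(c)=\soc(B)$ equals $p\Z/(p^2)$, on which $+$ and $\cdot$ agree and $\lambda$ acts trivially; writing every $a\in B$ as $1^{\cdot k}+p\ell$ with $p\ell\in\soc(B)$ forces $c(a)=\gamma\bar a$ for some $\gamma\in(\Z/(p))^\times$, where $\bar a=a\bmod p$. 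The additive automorphism $F(x)=\gamma x$ then conjugates $\gamma$ to $1$, yielding $a\cdot b=a+b+pab$, the second item listed under $\Z/(p^2)$.

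For order $p^2$ with additive group $\Z/(p)\times\Z/(p)$, $\aut=GL_2(\mathbb{F}_p)$ has Sylow $p$-subgroups of order $p$, all conjugate to the upper unitriangular subgroup. By the conjugation remark, I may assume $\lambda_a=\begin{pmatrix}1&c(a)\\0&1\end{pmatrix}$ for a homomorphism $c:(B,\cdot)\to\Z/(p)$, whence $a\cdot b=(a_1+b_1+c(a)b_2,\,a_2+b_2)$. Iterating $c(a\cdot(1,0))=c(a)+c((1,0))$ gives $c((a_1,a_2))=\alpha a_1+f(a_2)$; substituting into the analogous relation for $b=(0,1)$ and comparing $a_1$-coefficients forces $\alpha^2=0$ in $\Z/(p)$, hence $\alpha=0$. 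Then $f(a_2)=\beta a_2$ and $c(a)=\beta a_2$; $\beta=0$ gives (iii), while for $\beta\neq 0$ the additive automorphism $F(x_1,x_2)=(\beta^{-1}x_1,x_2)$ normalizes $\beta$ to $1$, giving (iv).

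The main obstacle is the bookkeeping in the $\Z/(p)\times\Z/(p)$ case: verifying that all possible $c$'s have the claimed form and that the rescaling actually intertwines the brace operations as required by the isomorphism criterion of Theorem~\ref{BenDavid}. The multiplicative-group determinations are then straightforward inductions: in the $\Z/(p^2)$ brace, $1^{\cdot n}=n+\binom{n}{2}p\pmod{p^2}$, so for odd $p$ we get $1^{\cdot p}=p\neq 0$ and $1^{\cdot p^2}=0$ (hence $\Z/(p^2)$), while for $p=2$ every element is a $\cdot$-involution (hence $\Z/(2)\times\Z/(2)$); in the $\Z/(p)\times\Z/(p)$ brace, setting $u=(0,1)$ gives $u^{\cdot n}=(\binom{n}{2},n)$, so $u$ has $\cdot$-order $p$ for odd $p$ (and together with the socle element $(1,0)$ generates $\Z/(p)\times\Z/(p)$) and $\cdot$-order $4$ for $p=2$ (giving $\Z/(4)$).
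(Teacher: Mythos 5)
Your proposal is correct, but it takes a noticeably different route from the paper. Where the paper handles the cyclic case $\Z/(p^2)$ by citing Rump's classification of cyclic braces, and handles $\Z/(p)\times\Z/(p)$ through the socle quotient plus Theorem~\ref{BenDavid} (a faithful $\sigma$ of $G/\soc(G)\cong\Z/(p)$ into $GL_2(\Z/(p))$ reduced to Jordan form, together with a compatible surjection $h$ normalized to $(0,1)$), you work directly with the map $\lambda:(B,\cdot)\to\aut(B,+)$: you conjugate $\im\lambda$ into a Sylow $p$-subgroup (legitimate, since transporting the brace structure along $F\in\aut(B,+)$ conjugates $\im\lambda$), write $\lambda_a$ through a function $c$, and exploit the self-consistency constraint that $c$ must be a homomorphism of the very multiplication it defines; this pins down $c(a)=\gamma\bar a$ in the cyclic case and, via the clean computation $\alpha^2=0\Rightarrow\alpha=0$, $c(a)=\beta a_2$ in the elementary abelian case, after which a diagonal additive automorphism normalizes the scalar. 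Your route is self-contained and more elementary (no appeal to Rump or to the extension machinery), at the cost of not setting up the socle-quotient formalism that the paper reuses systematically for order $p^3$; the paper's route buys uniformity with the later sections and gets existence of the listed multiplications for free from the construction direction of Theorem~\ref{BenDavid}. Two points you leave implicit and should state for completeness: that the nontrivial multiplications listed are indeed braces (a routine verification, e.g.\ of associativity of $x\cdot y=x+y+pxy$ and of the $(\Z/(p))^2$ product), and that the trivial and nontrivial structures on a fixed additive group are non-isomorphic (immediate, e.g.\ by comparing socles), so the list has no collisions; your normal-form reductions otherwise establish completeness, and your power computations for the multiplicative groups agree with the paper's formula $(\star)$.
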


\begin{modproof}
 \begin{itemize}
\item Brace of order $p$.

For order equal to $p$, we must have additive group and multiplicative 
group both isomorphic to $\Z/(p)$. The only possible morphism
$\lambda:\Z/(p)\to \aut(\Z/(p))\cong (\Z/(p))^*$ is the trivial one, so $x\cdot y:=x+y$ is 
the only possible brace structure.

\item $(G,+)$ isomorphic to $\Z/(p^2)$.

See \cite[Theorem~1 and Proposition~4]{Rump} for the details. There are two possibilities: the trivial pro\-duct 
$x\cdot y:=x+y$, and the multiplication $x\cdot y:=x+(1+p)^xy=x+y+pxy$.
When $p\neq 2$, we have in both cases that $(G,\cdot)\cong \Z/(p^2)$ because 1 is an element of order $p^2$
of the multiplicative groups of this brace. 
When $p=2$, we have $(G,\cdot)\cong \Z/(4)$ in the first case, and $(G,\cdot)\cong \Z/(2)\times\Z/(2)$ 
in the second case because all the elements have multiplicative order equal to  $2$ or $1$.

\item $(G,+)$ isomorphic to $\Z/(p)\times\Z/(p)$.

\paragraph{Socle of order $p^2$.}
Since $\soc(G)=G$, the brace must be trivial; i.e. the multiplication and the sum coincide: $$
\begin{pmatrix}
x_1\\
y_1
\end{pmatrix}\cdot
\begin{pmatrix}
x_2\\
y_2
\end{pmatrix}:=
\begin{pmatrix}
x_1+x_2\\
y_1+y_2
\end{pmatrix}.$$

\paragraph{Socle of order $p$.}
Consider any monomorphism 
$$\sigma:(G/\soc(G),\cdot)\cong\Z/(p)\to \aut(G,+)\cong\aut(\Z/(p)\times\Z/(p))\cong GL_2(\Z/(p)).$$ 
It is determined by the image of 1, which is a matrix 
$A\in GL_2(\Z/(p))$ of order $p$. Consider also a surjective morphism
$$h:(G,+)\cong\Z/(p)\times\Z/(p)\to (G/\soc(G),+)\cong\Z/(p),$$ which is determined by a non-zero matrix $(\alpha,\beta)$,
with $\alpha,\beta\in\Z/(p)$,
$$h(x,y)=\alpha x+\beta y=
\begin{pmatrix}
\alpha & \beta \\
\end{pmatrix}
\begin{pmatrix}
x \\
y \\
\end{pmatrix}.$$
The condition $h(\sigma(k)(x,y))=\lambda_k(h(x,y))=h(x,y)$ is equivalent in this case 
to $(\alpha,\beta)A=(\alpha,\beta)$, so $(\alpha,\beta)$ must be an eigenvector
of eigenvalue 1 of $A^t$. Then, by Theorem \ref{BenDavid}, any structure of left brace on $\Z/(p)\times\Z/(p)$ 
in this case is given by 
$$
\begin{pmatrix}
x_1\\
y_1
\end{pmatrix}\cdot
\begin{pmatrix}
x_2\\
y_2
\end{pmatrix}:=
\begin{pmatrix}
x_1\\
y_1
\end{pmatrix}+\sigma(h(x_1,y_1))\begin{pmatrix}x_2\\y_2\end{pmatrix}=
\begin{pmatrix}
x_1\\
y_1
\end{pmatrix}
+A^{h(x_1,y_1)} 
\begin{pmatrix}
x_2\\
y_2
\end{pmatrix}.$$

We have to find which of this braces are isomorphic. 
The condition of isomorphism is in this case
$$
A^{(\alpha,~\beta) (x,y)^t}=F^{-1} A'^{(\alpha',~\beta') F(x,y)^t} F=
\left(F^{-1} A' F\right)^{(\alpha',~\beta') F(x,y)^t},
$$
for some $F\in GL_2(\Z/(p))$.

One possible way to use this condition is to change the matrix $A$ by one of its conjugates, 
taking into account that the vector $(\alpha,\beta)$ is multiplied by $F$ on the right.
Since $A$ has order $p$, $0=A^p-\Id=(A-\Id)^p$, and thus its minimal polynomial divides $(x-1)^p$. 
Then $A$ is conjugate to a matrix of Jordan form of eigenvalue 1, 
so we may take 
$A=\begin{pmatrix}
1 & 1 \\
0 & 1 \\
\end{pmatrix}.$ 
Since $(\alpha,\beta)$ must 
be an eigenvector of $A^t$, we have $(\alpha,\beta)=(0,\gamma)$, $\gamma\neq 0$. But using 
$F=\gamma^{-1}\Id,$ we obtain 
$A=\begin{pmatrix}
1 & 1 \\
0 & 1 \\
\end{pmatrix}$ and $(\alpha,\beta)=(0,1)$. So the only structure of brace up to isomorphism
is 
\begin{eqnarray*}
\begin{pmatrix}
 x_1\\
 y_1
\end{pmatrix}
\cdot 
\begin{pmatrix}
 x_2\\
 y_2
\end{pmatrix}
&:=&\begin{pmatrix}
   x_1\\
   y_1
  \end{pmatrix}
+\begin{pmatrix}
1 & 1 \\
0 & 1 \\
\end{pmatrix}^{\begin{pmatrix}
0&1
\end{pmatrix}\begin{pmatrix}
x_1 \\
y_1 \\
\end{pmatrix}}\begin{pmatrix}
x_2 \\
y_2 \\
\end{pmatrix}\\[5pt]
&=&\begin{pmatrix}
  x_1\\
  y_1
 \end{pmatrix}
+\begin{pmatrix}
1 & y_1 \\
0 & 1 \\
\end{pmatrix}
\begin{pmatrix}
x_2 \\
y_2 \\
\end{pmatrix}=
\begin{pmatrix}
 x_1+x_2+y_1y_2\\
 y_1+y_2
\end{pmatrix}.
\end{eqnarray*}

We can compute all the powers of any element of a left brace using the formula
\begin{equation}\label{powers}
 x^n=x+\lambda_x(x)+\lambda^2_x(x)+\cdots +\lambda_x^{n-1}(x)=(\id+\lambda_x+\lambda^2_x+\cdots+\lambda_x^{n-1})(x),\tag{$\star$}
\end{equation}
which is easy to prove by induction. In this paper, the $\lambda_x$'s are always matrices, 
and then, to be able to apply this formula, we only need to compute
powers of matrices by induction, and then add all of them.

Specifically, in our present case, 
$\lambda_{(x,y)}=
\begin{pmatrix}
1 & y \\
0 & 1 \\
\end{pmatrix}$ and 
$\lambda^n_{(x,y)}=
\begin{pmatrix}
1 & ny \\
0 & 1 \\
\end{pmatrix}$, so
$$
\begin{pmatrix}
 x\\
 y
\end{pmatrix}^n=
\left(
\begin{pmatrix}
 1& 0\\
 0& 1\\
\end{pmatrix}+
\begin{pmatrix}
 1& y\\
 0& 1\\
\end{pmatrix}+
\begin{pmatrix}
 1& 2y\\
 0& 1\\
\end{pmatrix}+\cdots
+\begin{pmatrix}
 1& (n-1)y\\
 0& 1\\
\end{pmatrix}
\right)
\begin{pmatrix}
 x\\
 y
\end{pmatrix}
=
\begin{pmatrix}
 nx+y^2\displaystyle\sum^{n-1}_{i=1} i\\
 ny
\end{pmatrix}.
$$

When $p\neq 2$, $(G,\cdot)$ has exponent $p$ because $(x,y)^p=(px+\frac{p(p-1)}{2}y^2,py)=(0,0)$, and thus $(G,\cdot)\cong\Z/(p)\times\Z/(p)$. 
When $p=2$, $(x,y)^2=(y^2,0)$, so $(0,1)$ has order $4$ in the multiplicative group of this brace, and thus
$(G,\cdot)\cong\Z/(4)$.

\paragraph{Trivial socle.}
It is impossible in this case: there is no injective morphism 
$\lambda:(G,\cdot)\to\aut(\Z/(p)\times\Z/(p))$ because 
$\ord{G}=p^2$
and $\ord{\aut(\Z/(p)\times\Z/(p))}=p(p-1)(p^2-1)$.\qed
\end{itemize}\end{modproof}

Finally, since we want to know the structure of the multiplicative group of each 
left brace of order $p^3$, we need to recall the classification of
non-abelian groups of order $p^3$.
\begin{proposition}
Let $G$ be a non-abelian group of order $p^3$.
\begin{enumerate}[(i)]
\item If $p=2$, then $G$ is isomorphic to the dihedral group
$$
D_4=\left\langle x,y~\middle|~ x^2=y^4=1,~yx=xy^3\right\rangle,
$$ 
or to the quaternion group
$$
Q=\left\langle x,y~\middle|~ x^2=y^2,~x^4=y^4=1,~y^{-1}xy=x^{-1}\right\rangle.
$$
\item If $p\neq 2$, then $G$ is isomorphic to 
$$
M(p)=\left\langle x,y,z~\middle|~ x^p=y^p=z^p=1,~[x,z]=[x,y]=1,~[x,y]=z\right\rangle,
$$
or to
$$
M_3(p)=\left\langle x,y~\middle|~ x^{p^2}=y^p=1,~y^{-1}xy=x^{1+p}\right\rangle.
$$
\end{enumerate}
\end{proposition}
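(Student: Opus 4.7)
The plan is to apply the standard structure theory of $p$-groups together with a case analysis driven by the exponent of $G$. Since $G$ is a non-trivial $p$-group, $Z(G)$ is non-trivial; since $G$ is non-abelian, $Z(G)\neq G$, and $G/Z(G)$ cannot be cyclic. Hence $|Z(G)|=p$ and $G/Z(G)\cong\Z/(p)\times\Z/(p)$. The commutator subgroup $[G,G]$ is contained in $Z(G)$ and is non-trivial, so $[G,G]=Z(G)$ has order $p$.

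For $p=2$, I would first produce an element $y$ of order $4$: if every element had order dividing $2$, the group would be abelian. Then $\langle y\rangle$ has index $2$ and is normal, and for any $x\notin\langle y\rangle$ one has $xyx^{-1}\in\{y,y^{-1}\}$; non-abelianness forces $xyx^{-1}=y^{-1}=y^{3}$. Next, $x^{2}\in Z(G)=\langle y^{2}\rangle$, so $x^{2}\in\{1,y^{2}\}$. The case $x^{2}=1$ yields the relations for $D_{4}$, and the case $x^{2}=y^{2}$ yields those for $Q$. A short computation (using $y^{k}x=xy^{-k}$) shows that in the second case every element of $G\setminus\langle y\rangle$ has square $y^{2}$, so these two groups are genuinely non-isomorphic.

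For odd $p$, I split on whether $G$ has an element of order $p^{2}$. If not, $G$ has exponent $p$; then picking $x,y\in G$ whose images generate $G/Z(G)$ and setting $z:=[x,y]$, the element $z$ generates $Z(G)$, we have $x^{p}=y^{p}=z^{p}=1$, and $[x,z]=[y,z]=1$ by centrality, which is exactly the presentation of $M(p)$. If instead $G$ contains an element $x$ of order $p^{2}$, then $\langle x\rangle$ is a normal subgroup of index $p$, and any $y\notin\langle x\rangle$ gives $y^{-1}xy=x^{k}$ with $k^{p}\equiv 1\pmod{p^{2}}$, forcing $k\equiv 1\pmod{p}$. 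Replacing $y$ by an appropriate power (possible because $G$ is non-abelian), I may assume $y^{-1}xy=x^{1+p}$, and the relation $y^{p}\in Z(G)=\langle x^{p}\rangle$ gives $y^{p}=x^{jp}$ for some integer $j$.

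The main obstacle is then to adjust $y$ so that $y^{p}=1$, which will produce the presentation of $M_{3}(p)$. This reduces to computing $(yx^{-j})^{p}$ via the identity $(uv)^{p}=u^{p}v^{p}[v,u]^{\binom{p}{2}}$, valid whenever $[u,v]$ is central. Since $p$ is odd, $p\mid\binom{p}{2}$, so the commutator correction lies in $\langle x^{p^{2}}\rangle=\{1\}$, and one obtains $(yx^{-j})^{p}=1$. This divisibility step is precisely what fails for $p=2$ and explains the need for the separate dihedral/quaternion dichotomy in that case.
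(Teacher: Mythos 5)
Your argument is correct, but it is a genuinely different route from the paper: the paper does not prove this proposition at all, it simply cites the standard classification of groups of order $p^3$ (\cite[Theorem~5.4.4]{Gorenstein}), whereas you give a self-contained elementary proof. Your structure is the classical one: $|Z(G)|=p$, $[G,G]=Z(G)$, then for $p=2$ an element of order $4$ plus the dichotomy $x^2\in\{1,y^2\}$ giving $D_4$ or $Q$, and for odd $p$ a case split on the exponent, with the class-two identity $(uv)^p=u^pv^p[v,u]^{\binom{p}{2}}$ and the divisibility $p\mid\binom{p}{2}$ used to normalize $y^p=1$ in the $M_3(p)$ case --- this last point is a nice touch, since it isolates exactly why $p=2$ behaves differently. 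What the citation buys the paper is brevity (the result is classical and not the point of the article); what your proof buys is transparency and independence from the reference. Two small points you should tighten if this were written out in full: (a) for $p=2$, justify that $x^2\in Z(G)=\{1,y^2\}$ (e.g.\ $x^2\in\langle y\rangle$, and $x^2=y^{\pm1}$ would force $x$ to commute with $y$, contradicting non-abelianness); (b) in each case you exhibit generators of $G$ satisfying the stated relations, so $G$ is a quotient of the presented group, and you should remark that each presentation defines a group of order at most $p^3$ (every element is a word $x^ay^bz^c$, resp.\ $x^ay^b$), so the surjection is an isomorphism. Neither is a gap in the ideas, only in the write-up.
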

\begin{proof}
See \cite[Theorem~5.4.4]{Gorenstein}.
\end{proof}

Observe that one difference between $M(p)$ and $M_3(p)$ is that the former has exponent $p$, while the later
 has elements of order $p^2$. Note also that $Q$ has only 
one element of order $2$, but $D_4$ has five elements of order $2$. So to differenciate between one group
of order $p^3$ or another,
we only have to determine if the multiplication is abelian or not, and to compute the order of its elements. 
We will skip the details of this last part, since the computations
are similar to the ones done in the proof of Proposition~\ref{bracesp2} using the formula (\ref{powers}).

\section{Main theorems}

\begin{theorem}{(Case $p=2$)}
The following is a complete list of left braces $G$ of order $8$ up to isomorphism,
classified with respect to its additive group, and then with respect to the order of its socle:
\begin{enumerate}
\item Additive group isomorphic to $\Z/(8)$:
\begin{itemize}
\item Socle of order $2$
$$
x_1\cdot x_2:=x_1+x_2+2x_1x_2
$$
$$
(G,\cdot)\cong \Z/(2)\times\Z/(4)
$$

\item Socle of order $4$
$$
x_1\cdot x_2:=x_1+(1+2\alpha)^{x_1}x_2,\text{ for }\alpha=1,2,3
$$
$$(G,\cdot)\cong\left\{
 \begin{array}{cl}
 Q,&~\alpha=1\\
 \Z/(8),&~\alpha= 2\\
 D_4,&~\alpha=3
 \end{array}
 \right.$$

\item Socle of order $8$
$$
x_1\cdot x_2:=x_1+x_2
$$
$$
(G,\cdot)\cong \Z/(8)
$$
\end{itemize}

\item Additive group isomorphic to $\Z/(2)\times\Z/(4)$
\begin{itemize}
\item Socle of order 1:
$$
\begin{pmatrix}y_1\\z_1+2x_1\end{pmatrix}\cdot \begin{pmatrix}y_2\\z_2+2x_2\end{pmatrix}:=
\begin{pmatrix}
y_1+y_2+(x_1+y_1+z_1+y_1z_1)z_2\\
z_1+2x_1+2z_1y_2+2(y_1+x_1z_1)z_2+z_2+2x_2
\end{pmatrix}
$$
$$
(G,\cdot)\cong D_4
$$

\item Socle of order $2$:
$$
\begin{pmatrix}x_1\\y_1\end{pmatrix}\cdot\begin{pmatrix}x_2\\y_2\end{pmatrix}
:=\begin{pmatrix}
x_1+x_2\\
y_1+y_2+2x_1x_2+2y_1y_2
\end{pmatrix}
$$
$$
(G,\cdot)\cong\Z/(2)\times\Z/(4)
$$

$$
\begin{pmatrix}x_1\\y_1\end{pmatrix}\cdot\begin{pmatrix}x_2\\y_2\end{pmatrix}
:=\begin{pmatrix}
x_1+x_2\\
y_1+y_2+2(x_1+y_1)x_2+2y_1y_2
\end{pmatrix}
$$
$$
(G,\cdot)\cong D_4
$$

$$
\begin{pmatrix}x_1\\y_1\end{pmatrix}\cdot\begin{pmatrix}x_2\\y_2\end{pmatrix}
:=\begin{pmatrix}
x_1+x_2\\
y_1+y_2+2y_1x_2+2x_1y_2
\end{pmatrix}
$$
$$
(G,\cdot)\cong\Z/(2)\times\Z/(4)
$$

$$
\begin{pmatrix}x_1\\y_1\end{pmatrix}\cdot\begin{pmatrix}x_2\\y_2\end{pmatrix}
:=\begin{pmatrix}
x_1+x_2\\
y_1+y_2+2y_1x_2+2(x_1+y_1)y_2
\end{pmatrix}
$$
$$
(G,\cdot)\cong (\Z/(2))^3
$$

$$
\begin{pmatrix}x_1\\y_1\end{pmatrix}\cdot\begin{pmatrix}x_2\\y_2\end{pmatrix}
:=\begin{pmatrix}
x_1+x_2+y_1y_2\\
y_1+y_2+2y_1x_2+2x_1y_2
\end{pmatrix}
$$
$$
(G,\cdot)\cong\Z/(2)\times\Z/(4)
$$

$$
\begin{pmatrix}x_1\\y_1\end{pmatrix}\cdot\begin{pmatrix}x_2\\y_2\end{pmatrix}
:=\begin{pmatrix}
x_1+x_2+y_2\displaystyle\sum^{y_1-1}_{i=1} i\\
y_1+y_2+2y_1y_2
\end{pmatrix}
$$
$$
(G,\cdot)\cong D_4
$$

\item Socle of order $4$:
$$
\begin{pmatrix}x_1\\y_1\end{pmatrix}\cdot\begin{pmatrix}x_2\\y_2\end{pmatrix}
:=\begin{pmatrix}
x_1+x_2\\
y_1+y_2+2x_1y_2
\end{pmatrix}
$$
$$
(G,\cdot)\cong D_4
$$

$$
\begin{pmatrix}x_1\\y_1\end{pmatrix}\cdot\begin{pmatrix}x_2\\y_2\end{pmatrix}
:=\begin{pmatrix}
x_1+x_2\\
y_1+y_2+2y_1y_2
\end{pmatrix}
$$
$$
(G,\cdot)\cong (\Z/(2))^3
$$

$$
\begin{pmatrix}x_1\\y_1\end{pmatrix}\cdot\begin{pmatrix}x_2\\y_2\end{pmatrix}
:=\begin{pmatrix}
x_1+x_2\\
y_1+y_2+2y_1x_2
\end{pmatrix}
$$
$$
(G,\cdot)\cong D_4
$$

$$
\begin{pmatrix}x_1\\y_1\end{pmatrix}\cdot\begin{pmatrix}x_2\\y_2\end{pmatrix}
:=\begin{pmatrix}
x_1+x_2\\
y_1+y_2+2x_1x_2
\end{pmatrix}
$$
$$
(G,\cdot)\cong\Z/(2)\times\Z/(4)
$$

$$
\begin{pmatrix}x_1\\y_1\end{pmatrix}\cdot\begin{pmatrix}x_2\\y_2\end{pmatrix}
:=\begin{pmatrix}
x_1+x_2\\
y_1+y_2+2(x_1+y_1)x_2
\end{pmatrix}
$$
$$
(G,\cdot)\cong Q
$$

$$
\begin{pmatrix}x_1\\y_1\end{pmatrix}\cdot\begin{pmatrix}x_2\\y_2\end{pmatrix}
:=\begin{pmatrix}
x_1+x_2+y_1y_2\\
y_1+y_2
\end{pmatrix}
$$
$$
(G,\cdot)\cong \Z/(2)\times\Z/(4)
$$

\item Socle of order $8$:
$$
\begin{pmatrix}x_1\\y_1\end{pmatrix}\cdot\begin{pmatrix}x_2\\y_2\end{pmatrix}
:=\begin{pmatrix}
x_1+x_2\\
y_1+y_2
\end{pmatrix}
$$
$$
(G,\cdot)\cong\Z/(2)\times\Z/(4)
$$
\end{itemize}

\item Additive group isomorphic to $(\Z/(2))^3$
\begin{itemize}
\item Socle of order 1:
$$
\begin{pmatrix}x_1\\y_1\\z_1\end{pmatrix}\cdot\begin{pmatrix}x_2\\y_2\\z_2\end{pmatrix}
:=\begin{pmatrix}
x_1+x_2+z_1y_2+x_1z_2+y_1z_2+x_1z_1z_2\\
y_1+y_2+z_1z_2+x_1z_2+y_1z_1z_2\\
z_1+z_2
\end{pmatrix}
$$
$$
(G,\cdot)\cong D_4
$$

\item Socle of order $2$:
$$
\begin{pmatrix}x_1\\y_1\\z_1\end{pmatrix}\cdot\begin{pmatrix}x_2\\y_2\\z_2\end{pmatrix}
:=\begin{pmatrix}
x_1+x_2+z_1y_2+y_1z_2\\
y_1+y_2\\
z_1+z_2
\end{pmatrix}
$$
$$
(G,\cdot)\cong(\Z/(2))^3
$$

$$
\begin{pmatrix}x_1\\y_1\\z_1\end{pmatrix}\cdot\begin{pmatrix}x_2\\y_2\\z_2\end{pmatrix}
:=\begin{pmatrix}
x_1+x_2+y_1y_2+z_1z_2\\
y_1+y_2\\
z_1+z_2
\end{pmatrix}
$$
$$
(G,\cdot)\cong\Z/(2)\times\Z/(4)
$$

$$
\begin{pmatrix}x_1\\y_1\\z_1\end{pmatrix}\cdot\begin{pmatrix}x_2\\y_2\\z_2\end{pmatrix}
:=\begin{pmatrix}
x_1+x_2+(y_1+z_1)y_2+z_1z_2\\
y_1+y_2\\
z_1+z_2
\end{pmatrix}
$$
$$
(G,\cdot)\cong Q
$$

$$
\begin{pmatrix}x_1\\y_1\\z_1\end{pmatrix}\cdot\begin{pmatrix}x_2\\y_2\\z_2\end{pmatrix}
:=\begin{pmatrix}
x_1+x_2+z_1y_2+y_1z_2\\
y_1+y_2+z_1z_2\\
z_1+z_2
\end{pmatrix}
$$
$$
(G,\cdot)\cong\Z/(2)\times\Z/(4)
$$

\item Socle of order $4$:
$$
\begin{pmatrix}x_1\\y_1\\z_1\end{pmatrix}\cdot\begin{pmatrix}x_2\\y_2\\z_2\end{pmatrix}
:=\begin{pmatrix}
x_1+x_2+y_1y_2\\
y_1+y_2\\
z_1+z_2
\end{pmatrix}
$$
$$
(G,\cdot)\cong\Z/(2)\times\Z/(4)
$$

$$
\begin{pmatrix}x_1\\y_1\\z_1\end{pmatrix}\cdot\begin{pmatrix}x_2\\y_2\\z_2\end{pmatrix}
:=\begin{pmatrix}
x_1+x_2+z_1y_2\\
y_1+y_2\\
z_1+z_2
\end{pmatrix}
$$
$$
(G,\cdot)\cong D_4
$$

\item Socle of order $8$:
$$
\begin{pmatrix}x_1\\y_1\\z_1\end{pmatrix}\cdot\begin{pmatrix}x_2\\y_2\\z_2\end{pmatrix}
:=\begin{pmatrix}
x_1+x_2\\
y_1+y_2\\
z_1+z_2
\end{pmatrix}
$$
$$
(G,\cdot)\cong (\Z/(2))^3
$$
\end{itemize}
\end{enumerate}
\end{theorem}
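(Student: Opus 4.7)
The plan is to apply Theorem~\ref{BenDavid} systematically, stratifying by the additive group (three cases: $\Z/(8)$, $\Z/(2)\times\Z/(4)$, and $(\Z/(2))^3$) and, within each, by the order of the socle ($1$, $2$, $4$, or $8$). The two extreme strata are immediate: if $\soc(G)=G$, the brace is trivial, so $x\cdot y=x+y$ and $(G,\cdot)\cong (G,+)$; if $\soc(G)=\{0\}$, the map $\lambda$ embeds $(G,\cdot)$ as a subgroup of order $8$ inside $\aut(G,+)$, and I must analyze this inclusion separately.

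For the intermediate strata (socle of order $2$ or $4$), the brace $B:=G/\soc(G)$ has order $4$ or $2$ and is already listed in Proposition~\ref{bracesp2}. Theorem~\ref{BenDavid} reduces the problem to enumerating pairs $(\sigma,h)$, where $\sigma\colon(B,\cdot)\to\aut(G,+)$ is an injective morphism, $h\colon(G,+)\to(B,+)$ is surjective, and the compatibility $h\circ\sigma(g)=\lambda_g\circ h$ holds, modulo the equivalence relation induced by automorphisms $F\in\aut(G,+)$. Using the remark after Theorem~\ref{BenDavid}, I would first put $\sigma$ into a canonical form by conjugation (for the socle-of-order-$4$ case, this amounts to choosing a canonical generator of order $2$ inside $\aut(G,+)$; for the socle-of-order-$2$ case inside an additive group of order $8$, $\sigma$ has image of order $4$, and I would classify cyclic and Klein-four subgroups of $\aut(G,+)$ up to conjugacy). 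With $\sigma$ fixed, I would then simplify $h$ by using only automorphisms $F$ that centralise the image of $\sigma$; the compatibility condition forces $h$ to factor through a specific $\sigma$-invariant quotient, which cuts down the surviving possibilities to those appearing in the theorem.

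For the trivial-socle stratum, Theorem~\ref{BenDavid} is degenerate, so I would work directly from the data: a brace structure on $G$ is a bijective map $\pi\colon M\to(G,+)$, with $M\leq\aut(G,+)$ of order $8$, satisfying the cocycle identity $\pi(ab)=\pi(a)+a(\pi(b))$, and two such cocycles give isomorphic braces iff they differ by an $F\in\aut(G,+)$ normalising $M$. I would first identify the available $M$: $\aut(\Z/(8))$ has order $4$, so this case is vacuous; $\aut(\Z/(2)\times\Z/(4))$ has order $8$, forcing $M$ to be the whole automorphism group; and $\aut((\Z/(2))^3)\cong GL_3(\Z/(2))$ has a Sylow $2$-subgroup of order $8$, all such being conjugate and isomorphic to $D_4$. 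Then I would enumerate bijective $1$-cocycles out of these fixed groups up to the normaliser action, which in each case yields exactly one brace.

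Finally, for every brace produced by the above procedure, I would compute the multiplicative group's isomorphism type using formula~(\ref{powers}): since the $\lambda_x$'s are explicit small matrices, this reduces to elementary induction to read off $x^2$ and $x^4$, and then to checking whether the multiplication is abelian and counting elements of order $4$---enough to distinguish among $\Z/(8)$, $\Z/(2)\times\Z/(4)$, $(\Z/(2))^3$, $D_4$, and $Q$. The main obstacle will be the bookkeeping for the socle-of-order-$2$ case with additive group $\Z/(2)\times\Z/(4)$ and for the trivial-socle case with additive group $(\Z/(2))^3$: in both of these, numerous candidate pairs $(\sigma,h)$ (respectively cocycles $\pi$) collapse under the equivalence relation, and correctly identifying a set of representatives matching the list in the theorem is the delicate part; conversely, verifying that the listed multiplications actually satisfy the brace axioms is a direct check but must be done for each entry.
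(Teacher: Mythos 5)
Your plan follows the paper's proof essentially verbatim: stratify by additive group and then by the order of the socle, apply Theorem~\ref{BenDavid} for nontrivial socle with $\sigma$ normalized into a fixed Sylow $2$-subgroup of $\aut(G,+)$ and $h$ simplified by automorphisms commuting with (or normalizing) the image of $\sigma$, treat the trivial-socle stratum by directly enumerating bijective $1$-cocycles $\pi$ on the order-$8$ subgroup of $\aut(G,+)$ up to the normalizer action, and read off the multiplicative groups via formula~(\ref{powers}) plus commutativity and element-order counts. The only cosmetic deviation is that for $(G,+)\cong\Z/(8)$ the paper simply quotes Rump's classification of cyclic braces rather than re-running the Ben David machinery, but your route works there as well.
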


\paragraph{Notation:} $C(n,m)$ denotes the binomial coefficient, which is equal to $\displaystyle\frac{n!}{m!(n-m)!}$ 
if $n\geq m$, and equal to $0$ if $n<m$.

\begin{theorem}{(Case $p\neq 2$)}
Let $p$ be a prime different from $2$. 
Let $\varepsilon$ be a fixed element of $\Z/(p)$ which is not a square. 
Then, the following is a complete list of left braces $G$ of order $p^3$ up to isomorphism,
 classified with respect to its additive group, and then with respect to the order of its socle:
\begin{enumerate}
\item Additive group isomorphic to $\Z/(p^3)$:
\begin{itemize}
\item Socle of order $p$:
$$
x_1\cdot x_2:=x_1+x_2+px_1x_2
$$
$$
(G,\cdot)\cong\Z/(p^3)
$$

\item Socle of order $p^2$:
$$
x_1\cdot x_2:=x_1+x_2+p^2x_1x_2
$$
$$
(G,\cdot)\cong\Z(p^3)
$$

\item Socle of order $p^3$:
$$
x_1\cdot x_2:=x_1+x_2
$$
$$
(G,\cdot)\cong\Z/(p^3)
$$
\end{itemize}

\item Additive group isomorphic to $\Z/(p)\times\Z/(p^2)$
\begin{itemize}
\item Socle of order $p$:
$$
\begin{pmatrix}x_1\\y_1\end{pmatrix}\cdot\begin{pmatrix}x_2\\y_2\end{pmatrix}
:=\begin{pmatrix}
x_1+x_2\\
y_1+y_2+p(\varepsilon x_1+\lambda y_1)x_2+py_1y_2
\end{pmatrix}\text{, for each }\lambda\in\{0,1,\dots,(p-1)/2\}
$$
$$
(G,\cdot)\cong\left\{
 \begin{array}{cl}
 \Z/(p)\times\Z/(p^2),&~\lambda=0\\
 M_3(p),&~\lambda\neq 0\\
 \end{array}
 \right.
$$

$$
\begin{pmatrix}x_1\\y_1\end{pmatrix}\cdot\begin{pmatrix}x_2\\y_2\end{pmatrix}
:=\begin{pmatrix}
x_1+x_2\\
y_1+y_2+p(x_1+\lambda y_1)x_2+py_1y_2
\end{pmatrix}\text{, for each }\lambda\in\{0,1,\dots,(p-1)/2\}
$$
$$
(G,\cdot)\cong\left\{
 \begin{array}{cl}
 \Z/(p)\times\Z/(p^2),&~\lambda=0\\
 M_3(p),&~\lambda\neq 0\\
 \end{array}
 \right.
$$

$$
\begin{pmatrix}x_1\\y_1\end{pmatrix}\cdot\begin{pmatrix}x_2\\y_2\end{pmatrix}
:=\begin{pmatrix}
x_1+x_2\\
y_1+y_2+p\lambda y_1x_2+px_1y_2
\end{pmatrix}\text{, for each }\lambda\in\{1,2,\dots,p-1\}
$$
$$
(G,\cdot)\cong\left\{
 \begin{array}{cl}
 \Z/(p)\times\Z/(p^2),&~\lambda=1\\
 M_3(p),&~\lambda\neq 1\\
 \end{array}
 \right.
$$

$$
\begin{pmatrix}x_1\\y_1\end{pmatrix}\cdot\begin{pmatrix}x_2\\y_2\end{pmatrix}
:=\begin{pmatrix}
x_1+x_2\\
y_1+y_2-py_1x_2+p(x_1+y_1)y_2
\end{pmatrix}
$$
$$
(G,\cdot)\cong M_3(p)
$$

$$
\begin{pmatrix}x_1\\y_1\end{pmatrix}\cdot\begin{pmatrix}x_2\\y_2\end{pmatrix}
:=\begin{pmatrix}
x_1+x_2+y_1y_2\\
y_1+y_2+pay_1x_2+px_1y_2+p(a-1)~ C(y_1, 2)~y_2
\end{pmatrix}\text{, for each }a\in\{0,1,\dots,p-1\}
$$
$$
p\neq 3,~~(G,\cdot)\cong\left\{
 \begin{array}{cl}
 \Z/(p)\times\Z/(p^2),&~a=1\\
 M_3(p),&~a\neq 1\\
 \end{array}
 \right.
$$
$$
p=3,~~(G,\cdot)\cong\left\{
 \begin{array}{cl}
 \Z/(3)\times\Z/(9),&~a=1\\
 M(3),&~a= -1\\
 M_3(3),&~a=0
 \end{array}
 \right.
$$

$$
\begin{pmatrix}x_1\\y_1\end{pmatrix}\cdot\begin{pmatrix}x_2\\y_2\end{pmatrix}
:=\begin{pmatrix}
x_1+x_2+y_1y_2\\
y_1+y_2+pay_1x_2+p\varepsilon x_1y_2+p(a-\varepsilon)~C(y_1, 2)~y_2
\end{pmatrix}\text{, for each }a\in\{0,1,\dots,p-1\}
$$
$$
p\neq 3,~~(G,\cdot)\cong\left\{
 \begin{array}{cl}
 \Z/(p)\times\Z/(p^2),&~a=\varepsilon\\
 M_3(p),&~a\neq \varepsilon\\
 \end{array}
 \right.
$$
$$
p=3,~~(G,\cdot)\cong\left\{
 \begin{array}{cl}
 M_3(3),&~a=1\\
 (\Z/(3))^3,&~a= -1\\
 M_3(3),&~a=0
 \end{array}
 \right.
$$

\item Socle of order $p^2$:
$$
\begin{pmatrix}x_1\\y_1\end{pmatrix}\cdot\begin{pmatrix}x_2\\y_2\end{pmatrix}
:=\begin{pmatrix}
x_1+x_2\\
y_1+y_2+px_1y_2
\end{pmatrix}
$$
$$
(G,\cdot)\cong M_3(p)
$$

$$
\begin{pmatrix}x_1\\y_1\end{pmatrix}\cdot\begin{pmatrix}x_2\\y_2\end{pmatrix}
:=\begin{pmatrix}
x_1+x_2\\
y_1+y_2+py_1y_2
\end{pmatrix}
$$
$$
(G,\cdot)\cong\Z/(p)\times\Z/(p^2)
$$

$$
\begin{pmatrix}x_1\\y_1\end{pmatrix}\cdot\begin{pmatrix}x_2\\y_2\end{pmatrix}
:=\begin{pmatrix}
x_1+x_2\\
y_1+y_2+py_1x_2
\end{pmatrix}
$$
$$
(G,\cdot)\cong M_3(p)
$$

$$
\begin{pmatrix}x_1\\y_1\end{pmatrix}\cdot\begin{pmatrix}x_2\\y_2\end{pmatrix}
:=\begin{pmatrix}
x_1+x_2\\
y_1+y_2+px_1x_2
\end{pmatrix}
$$
$$
(G,\cdot)\cong\Z/(p)\times\Z/(p^2)
$$

$$
\begin{pmatrix}x_1\\y_1\end{pmatrix}\cdot\begin{pmatrix}x_2\\y_2\end{pmatrix}
:=\begin{pmatrix}
x_1+x_2\\
y_1+y_2+p(x_1+y_1)x_2
\end{pmatrix}
$$
$$
(G,\cdot)\cong M_3(p)
$$

$$
\begin{pmatrix}x_1\\y_1\end{pmatrix}\cdot\begin{pmatrix}x_2\\y_2\end{pmatrix}
:=\begin{pmatrix}
x_1+x_2+y_1y_2\\
y_1+y_2
\end{pmatrix}
$$
$$
(G,\cdot)\cong\Z/(p)\times\Z/(p^2)
$$

$$
\begin{pmatrix}x_1\\y_1\end{pmatrix}\cdot\begin{pmatrix}x_2\\y_2\end{pmatrix}
:=\begin{pmatrix}
x_1+x_2+y_1y_2\\
y_1+y_2+py_1x_2+p~C(y_1, 2)~ y_2
\end{pmatrix}
$$
$$
(G,\cdot)\cong M_3(p)
$$

$$
\begin{pmatrix}x_1\\y_1\end{pmatrix}\cdot\begin{pmatrix}x_2\\y_2\end{pmatrix}
:=\begin{pmatrix}
x_1+x_2+y_1y_2\\
y_1+y_2+p\varepsilon y_1x_2+p\varepsilon~ C(y_1, 2)~y_2
\end{pmatrix}.
$$
$$
(G,\cdot)\cong M_3(p)
$$

\item Socle of order $p^3$:
$$
\begin{pmatrix}x_1\\y_1\end{pmatrix}\cdot\begin{pmatrix}x_2\\y_2\end{pmatrix}
:=\begin{pmatrix}
x_1+x_2\\
y_1+y_2
\end{pmatrix}
$$
$$
(G,\cdot)\cong M_3(p)
$$
\end{itemize}

\item Additive group isomorphic to $(\Z/(p))^3$
\begin{itemize}
\item Socle of order $p$:
$$
\begin{pmatrix}x_1\\y_1\\z_1\end{pmatrix}\cdot\begin{pmatrix}x_2\\y_2\\z_2\end{pmatrix}
:=\begin{pmatrix}
x_1+x_2-z_1y_2+y_1z_2\\
y_1+y_2\\
z_1+z_2
\end{pmatrix}
$$
$$
(G,\cdot)\cong M(p)
$$

$$
\begin{pmatrix}x_1\\y_1\\z_1\end{pmatrix}\cdot\begin{pmatrix}x_2\\y_2\\z_2\end{pmatrix}
:=\begin{pmatrix}
x_1+x_2+(y_1+\lambda z_1)y_2+z_1z_2\\
y_1+y_2\\
z_1+z_2
\end{pmatrix}\text{, for each }\lambda\in\{0,1,\dots ,(p-1)/2\}
$$
$$
(G,\cdot)\cong\left\{
 \begin{array}{cl}
 (\Z/(p))^3,&~\lambda=0\\
 M(p),&~\lambda\neq 0\\
 \end{array}
 \right.
$$

$$
\begin{pmatrix}x_1\\y_1\\z_1\end{pmatrix}\cdot\begin{pmatrix}x_2\\y_2\\z_2\end{pmatrix}
:=\begin{pmatrix}
x_1+x_2+(\varepsilon y_1+\lambda z_1)y_2+ z_1z_2\\
y_1+y_2\\
z_1+z_2
\end{pmatrix}\text{, for each }\lambda\in\{0,1,\dots ,(p-1)/2\}
$$
$$
(G,\cdot)\cong\left\{
 \begin{array}{cl}
 (\Z/(p))^3,&~\lambda=0\\
 M(p),&~\lambda\neq 0\\
 \end{array}
 \right.
$$

$$
\begin{pmatrix}x_1\\y_1\\z_1\end{pmatrix}\cdot\begin{pmatrix}x_2\\y_2\\z_2\end{pmatrix}
:=\begin{pmatrix}
x_1+x_2+cz_1y_2+y_1z_2+(c-1)~C(z_1, 2)~z_2\\
y_1+y_2+z_1z_2\\
z_1+z_2
\end{pmatrix}\text{, for each }c\in\Z/(p).
$$
$$
p=3,~~(G,\cdot)\cong\left\{
 \begin{array}{cl}
 M(3),&~c=0\\
 \Z/(3)\times\Z/(9),&~c=1\\
 M_3(3),&~c\neq 0,1
 \end{array}
 \right.
$$
$$
p\neq 3,~~(G,\cdot)\cong\left\{
 \begin{array}{cl}
 (\Z/(p))^3,&~c= 1\\
 M(p),&~c\neq 1
 \end{array}
 \right.
$$

\item Socle of order $p^2$:
$$
\begin{pmatrix}x_1\\y_1\\z_1\end{pmatrix}\cdot\begin{pmatrix}x_2\\y_2\\z_2\end{pmatrix}
:=\begin{pmatrix}
x_1+x_2+y_1y_2\\
y_1+y_2\\
z_1+z_2
\end{pmatrix}
$$
$$
(G,\cdot)\cong (\Z/(p))^3
$$

$$
\begin{pmatrix}x_1\\y_1\\z_1\end{pmatrix}\cdot\begin{pmatrix}x_2\\y_2\\z_2\end{pmatrix}
:=\begin{pmatrix}
x_1+x_2+z_1y_2\\
y_1+y_2\\
z_1+z_2
\end{pmatrix}
$$
$$
(G,\cdot)\cong M(p)
$$

$$
\begin{pmatrix}x_1\\y_1\\z_1\end{pmatrix}\cdot\begin{pmatrix}x_2\\y_2\\z_2\end{pmatrix}
:=\begin{pmatrix}
x_1+x_2+z_1y_2+C(z_1, 2)z_2\\
y_1+y_2+z_1z_2\\
z_1+z_2
\end{pmatrix}
$$
$$
(G,\cdot)\cong\left\{
 \begin{array}{cl}
 M_3(3),&~p=3\\
 M(p),&~p\neq 3\\
 \end{array}
 \right.
$$

\item Socle of order $p^3$:
$$
\begin{pmatrix}x_1\\y_1\\z_1\end{pmatrix}\cdot\begin{pmatrix}x_2\\y_2\\z_2\end{pmatrix}
:=\begin{pmatrix}
x_1+x_2\\
y_1+y_2\\
z_1+z_2
\end{pmatrix}
$$
$$
(G,\cdot)\cong (\Z/(p))^3
$$
\end{itemize}
\end{enumerate}
\end{theorem}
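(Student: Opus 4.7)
My plan is to apply Theorem~\ref{BenDavid} and Proposition~\ref{bracesp2} systematically, stratifying by the additive group $A$ (one of $\Z/(p^3)$, $\Z/(p)\times\Z/(p^2)$, $(\Z/(p))^3$) and by the order of the socle. The two extremal cases are almost immediate: when $\soc(G)=G$ the brace is forced to be the trivial one with $x\cdot y=x+y$ on $A$, and when $\soc(G)=\{0\}$ the map $\lambda$ embeds $(G,\cdot)$ into $\aut(A,+)$, so by comparing orders the case $A=\Z/(p^3)$ is ruled out, and in the remaining two cases the image must coincide with the (essentially unique) Sylow $p$-subgroup of $\aut(A,+)$. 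In each of these, the brace would be determined by a bijective $1$-cocycle $\pi:(G,\cdot)\to(A,+)$, and a direct check via the formula $\pi(gh)=\pi(g)+g\,\pi(h)$ shows that no such cocycle exists when $p\neq 2$, explaining why no ``socle of order $1$'' case appears in the statement.

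For each intermediate socle order, set $B=G/\soc(G)$, a brace of order $p$ or $p^2$ from Proposition~\ref{bracesp2}, equipped with its $\lambda$-action. The procedure I would carry out is: (i) enumerate all injective morphisms $\sigma:(B,\cdot)\to\aut(A,+)$ by listing matrices of the correct order and satisfying the defining relations of $(B,\cdot)$; (ii) for each such $\sigma$, determine all surjective additive $h:A\to (B,+)$ subject to $h\circ\sigma(g)=\lambda_g\circ h$, a finite system of linear constraints; (iii) reduce modulo the equivalence $(\sigma,h)\sim(F^{-1}\sigma(h(F\bullet))F,\,h\circ F)$ for $F\in\aut(A,+)$. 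Guided by the remark after Theorem~\ref{BenDavid}, I would first conjugate $\sigma$ into a canonical Jordan-like form, and then exploit automorphisms $F$ commuting with the image of $\sigma$ to normalise $h$. Each surviving pair $(\sigma,h)$ produces the brace $x\cdot y=x+\sigma(h(x))(y)$, and its multiplicative group is identified by computing element orders and commutators via the power formula~(\ref{powers}), thereby distinguishing between $\Z/(p^3)$, $\Z/(p)\times\Z/(p^2)$, $(\Z/(p))^3$, $M(p)$ and $M_3(p)$.

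The main obstacle is the case $A=\Z/(p)\times\Z/(p^2)$ with socle of order $p$, and to a slightly lesser extent $A=(\Z/(p))^3$ with socle of order $p$. Here $B$ itself carries a non-trivial $\lambda$, so the admissible $\sigma(g)$ must simultaneously lie in the proper subgroup of $\aut(A,+)$ respecting the appropriate flag and satisfy the defining relation of $(B,\cdot)$; the compatibility with $h$ then forces a finer case split according to whether a certain coefficient is a square mod $p$, which is the source of the fixed non-square $\varepsilon$ and of the families indexed by $\lambda\in\{0,\dots,(p-1)/2\}$ or $a\in\Z/(p)$ in the statement. A further subtlety is the small-prime anomaly at $p=3$: the binomial $C(y_1,2)=y_1(y_1-1)/2$ interacts badly with the prime $3$ when iterated in~(\ref{powers}), so element orders collapse in unexpected ways and must be recomputed by hand, which is precisely why $(\Z/(3))^3$ and $\Z/(3)\times\Z/(9)$ turn up in a few places where $M(p)$ would otherwise appear.
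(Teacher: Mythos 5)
Your proposal follows essentially the same route as the paper: stratify by the additive group and by the order of the socle, dispose of the extremal cases directly, and for intermediate socle use Proposition~\ref{bracesp2} for $B=G/\soc(G)$ together with Theorem~\ref{BenDavid}, enumerating pairs $(\sigma,h)$ after conjugating $\sigma$ into a Sylow $p$-subgroup of $\aut(A,+)$, normalising $h$ by automorphisms commuting with the image of $\sigma$, and identifying the multiplicative groups via the power formula~(\ref{powers}); the trivial-socle case is likewise excluded for $p\neq 2$ by showing no bijective $1$-cocycle $\pi$ exists, and your diagnosis of the $p=3$ anomalies through $C(y_1,2)$ matches the paper's. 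The one caveat is that the substance of the theorem is the explicit execution of these enumerations and isomorphism reductions (including the separate counting argument the paper needs at $p=3$ for trivial socle on $\Z/(p)\times\Z/(p^2)$), which your outline describes but defers.
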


The next sections are devoted to the proof of these theorems, one section for every 
possible additive group.

\section{$(G,+)$ isomorphic to $\Z/(p^3)$}

When $p\neq 2$, we have the possible braces given by the multiplications $x\cdot y:=x+y$,  
$x\cdot y:=x+(1+px)y=x+y+pxy$, and $x\cdot y:=x+(1+p^2x)y=x+y+p^2xy$. In the three cases, we have 
$(G,\cdot)\cong \Z/(p^3)$ because $1$ is an element of order $p^3$ of the multiplicative group
of these braces. See \cite[Theorem~1 and Section~5]{Rump} for the missing details.

When $p=2$, we have the multiplications $x\cdot y:=x+y$,  
$x\cdot y:=x+(1+2x)y=x+y+2xy$, and $x\cdot y:=x+(1+2\alpha)^xy$ for 
$\alpha=1,2,3$. 
We have $(G,\cdot)\cong \Z/(8)$ in the first case, 
$(G,\cdot)\cong \Z/(2)\times\Z/(4)$ in the second case,
and, in the third case,
$(G,\cdot)\cong Q_8$ when $\alpha=1$,
$(G,\cdot)\cong\Z/(8)$ when $\alpha=2$, and
$(G,\cdot)\cong D_4$ when $\alpha=3$.
See \cite[Theorem~1 and Sections 6 and 7]{Rump} for the missing details.

\section{$(G,+)$ isomorphic to $\left(\Z/(p)\right)^3$}

When the socle is different from zero, $G/\soc(G)$ is a brace of 
order $\leq p^2$ with additive group isomorphic to $\Z/(p)$ or $(\Z/(p))^2$. 
By Proposition~\ref{bracesp2}, we know all the possible structures of 
brace over $G/\soc(G)$. For each of these possibilities, we apply Theorem~\ref{BenDavid} 
to find all the brace structures over $G$. To this end, we need to find all the faithful representations
$$\sigma:(G/\soc(G),\cdot)\hookrightarrow \aut(\left(\Z/(p)\right)^3)\cong GL_3(\Z/(p)),$$
and all the surjective morphisms $h:(G,+)\to (G/\soc(G),+)$ such that 
$h\circ \sigma(g)=\lambda_g\circ h$, where $\lambda_g(g')=g\cdot g'-g$
is the lambda map in $G/\soc(G)$. 
Then, some of these structures might give rise to isomorphic braces, so we have to 
determine the repeated cases.
Two of these structures are isomorphic if there exists an $F\in GL_3(\Z/(p))$
such that $\sigma'(h'(x,y,z))=F^{-1} \sigma(h(F(x,y,z))) F,$
for all $(x,y,z)\in(\Z/(p))^3$.

We begin by making a general simplification of the problem. 
Since the image of $G/\soc(G)$ by $\sigma$ is a $p$-group, 
it is contained in a Sylow $p$-subgroup of $GL_3(\Z/(p)).$ 
Observe that
$$\ord{GL_3(\Z/(p))}=(p^3-1)(p^3-p)(p^3-p^2)=p^3(p^3-1)(p^2-1)(p-1),$$
and thus a Sylow $p$-subgroup of $GL_3(\Z/(p))$ is
$$
T_p:=\left\lbrace 
\begin{pmatrix}
1 & a & b \\
0 & 1 & c\\
0 & 0 & 1
\end{pmatrix}\in GL_3(\Z/(p)):
a,b,c\in\Z/(p)\right\rbrace.
$$
We know that any two Sylow $p$-subgroups are conjugate,
and conjugate representations give rise to isomorphic braces, 
so we may take $\sigma: G/\soc(G)\hookrightarrow T_p$.

\subsection{Socle of order $p^3$}
Since $G=\soc(G)$, the only possible structure is the trivial one.

\subsection{Socle of order $p^2$}
$G/\soc(G)$ is a brace of order $p$, thus it is the trivial brace over $\Z/(p)$.
In this case, the morphism $\sigma$ is determined by a matrix $A\in T_p$ of order $p$, and the  
morphism $h$ is determined by a non-zero vector $(\alpha,\beta,\gamma)\in(\Z/(p))^3$ satisfying
$(\alpha,\beta,\gamma) A=(\alpha,\beta,\gamma)$. Then, the multiplication is given by
$$
\begin{pmatrix}
x_1\\
y_1\\
z_1
\end{pmatrix}\cdot
\begin{pmatrix}
x_2\\
y_2\\
z_2
\end{pmatrix}:=
\begin{pmatrix}
x_1\\
y_1\\
z_1
\end{pmatrix}+A^{\alpha x_1+\beta y_1 +\gamma z_1}
\begin{pmatrix}
x_2\\
y_2\\
z_2
\end{pmatrix}.
$$
Two of these structures, determined by $A$, $(\alpha,\beta,\gamma)$, and $A'$, $(\alpha',\beta',\gamma')$ 
respectively, are isomorphic if 
$$
A^{(\alpha,\beta,\gamma)(x,y,z)^t}=(F^{-1}A'F)^{(\alpha',~\beta',~\gamma')F(x,y,z)^t},
$$
for some matrix $F\in GL_3(\Z/(p)).$

Since $A$ has order $p$, $0=A^p-\Id=(A-\Id)^p$, and thus its minimal polynomial divides $(x-1)^p$. 
This implies that $A$ is conjugate to a matrix of Jordan form of eigenvalue 1, 
so we may take $A$ to be one of the following matrices:
$$
A=\begin{pmatrix}
1 & 1 & 0 \\
0 & 1 & 1\\
0 & 0 & 1
\end{pmatrix},\text{ or }
A=\begin{pmatrix}
1 & 1 & 0 \\
0 & 1 & 0\\
0 & 0 & 1
\end{pmatrix}.
$$
In the first case, $(\alpha,\beta,\gamma)=(0,0,k)$, $k\neq 0$, but using $F=k^{-1} \Id$, 
$A$ remains unchanged and the vector becomes  
$(\alpha,\beta,\gamma)=(0,0,1)$. In the second case, $(\alpha,\beta,\gamma)=(0,n,m)$. If $m\neq 0$, use 
$F=\begin{pmatrix}
1 & 0 & 0 \\
0 & 1 & 0\\
0 & -n/m & 1/m
\end{pmatrix}$
to obtain $(\alpha,\beta,\gamma)=(0,0,1)$ without changing $A$. If $m=0$, use $F=n^{-1} \Id$ to obtain
$(\alpha,\beta,\gamma)=(0,1,0)$ without changing $A$. In conclusion, we obtain three non-isomorphic braces:
\begin{enumerate}
\item $p\neq 2$, 
$\begin{pmatrix}
 x_1\\
 y_1\\
 z_1\\
\end{pmatrix}\cdot
\begin{pmatrix}
 x_2\\
 y_2\\
 z_2\\
\end{pmatrix}
:=\begin{pmatrix}
   x_1\\
   y_1\\
   z_1\\
  \end{pmatrix}
+\begin{pmatrix}
1 & 1 & 0 \\
0 & 1 & 1\\
0 & 0 & 1
\end{pmatrix}^{z_1}\begin{pmatrix}
x_2 \\
y_2\\
z_2
\end{pmatrix},$

\item $\begin{pmatrix}
 x_1\\
 y_1\\
 z_1\\
\end{pmatrix}\cdot
\begin{pmatrix}
 x_2\\
 y_2\\
 z_2\\
\end{pmatrix}
:=\begin{pmatrix}
   x_1\\
   y_1\\
   z_1\\
  \end{pmatrix}
+\begin{pmatrix}
1 & 1 & 0 \\
0 & 1 & 0\\
0 & 0 & 1
\end{pmatrix}^{y_1}\begin{pmatrix}
x_2 \\
y_2\\
z_2
\end{pmatrix},$

\item $\begin{pmatrix}
 x_1\\
 y_1\\
 z_1\\
\end{pmatrix}\cdot
\begin{pmatrix}
 x_2\\
 y_2\\
 z_2\\
\end{pmatrix}
:=\begin{pmatrix}
   x_1\\
   y_1\\
   z_1\\
  \end{pmatrix}+\begin{pmatrix}
1 & 1 & 0 \\
0 & 1 & 0\\
0 & 0 & 1
\end{pmatrix}^{z_1}\begin{pmatrix}
x_2 \\
y_2\\
z_2
\end{pmatrix}.$
\end{enumerate}

These three braces are non-isomorphic because the first case is a left brace
which is not a right brace, 
the second is a brace with abelian multiplicative group, and the third is a 
two-sided brace with non-abelian multiplicative group. 
The first case is not considered when $p=2$ because $A$ has order 4.

When $p\neq 2,3$, all the elements have order $p$, so we have $(G,\cdot)$ 
isomorphic to $M(p)$ in the first and the third cases, and to $(\Z/(p))^3$ in the second case.
When $p=3$, again we have $(G,\cdot)$ isomorphic to $(\Z/(p))^3$ in the second case,
and to $M(3)$ in the third case because all the elements have order $3$.
But, in the first case, $(G,\cdot)\cong M_3(3)$ because $(0,0,1)$ is an element of 
order $9$.
When $p=2$, we have $(G,\cdot)$ isomorphic to $\Z/(2)\times\Z/(4)$ in the second case ($(0,1,0)$ has order $4$,
and there are no elements of order $8$), and isomorphic to $D_4$ in the third case ($(1,0,0)$ and $(0,1,0)$
are two elements of order $2$).

\subsection{Socle of order $p$}
$G/\soc(G)$ is a brace of order $p^2$ with $(G/\soc(G),+)\cong (\Z/(p))^2$, and the possible structures over $G$ depend
on the structure over $G/\soc(G)$, which we have classified in Proposition~\ref{bracesp2}. In fact, $G/\soc(G)$ can only
have a structure of type (iv) or of type (v) of Proposition~\ref{bracesp2}.

\subsubsection{$G/\soc(G)$ is of type (iv)}
In this case, the morphism $\sigma$ is determined by
two matrices $A$ and $B$ in $GL_3(\Z/(p))$ of order $p$ which commute, 
and $h$ is determined by two linearly independent 
vectors $h_1$ and $h_2$ of $(\Z/(p))^3$ such that 
$h_i A=h_i$ and $h_i B=h_i$ 
(i.e. they are common eigenvectors of eigenvalue 1 of $A^t$ and of $B^t$). 
Then, the multiplication is given by
$$
\begin{pmatrix}
 x_1\\
 y_1\\
 z_1\\
\end{pmatrix}\cdot
\begin{pmatrix}
 x_2\\
 y_2\\
 z_2\\
\end{pmatrix}
:=\begin{pmatrix}
   x_1\\
   y_1\\
   z_1\\
  \end{pmatrix}+A^{h_1 (x_1,y_1,z_1)^t}B^{h_2 (x_1,y_1,z_1)^t}
\begin{pmatrix}
x_2\\
y_2\\
z_2
\end{pmatrix}.
$$
Two of these structures are isomorphic if 
$$
A^{h_1 (x,y,z)^t}B^{h_2 (x,y,z)^t}=(F^{-1}A'F)^{h'_1 F (x,y,z)^t}(F^{-1}B'F)^{h'_2 F (x,y,z)^t},
$$
for some $F\in GL_3(\Z/(p)).$

Since 
$A^t$ and $B^t$ have two common linearly independent eigenvectors, 
using a linear change of basis,
we can take $h_1=(0,1,0)$, $h_2=(0,0,1)$, and
$A=\begin{pmatrix}
1 & a & b\\
0 & 1 & 0\\
0 & 0 & 1
\end{pmatrix}$, 
$B=\begin{pmatrix}
1 & c & d\\
0 & 1 & 0\\
0 & 0 & 1
\end{pmatrix}.$
Observe that the $(1,1)$-entry of $A$ and $B$ is 1 because they both must have order $p$.
So we can make the following rearrangement of the multiplication
\begin{eqnarray*}
\begin{pmatrix}
 x_1\\
 y_1\\
 z_1\\
\end{pmatrix}\cdot
\begin{pmatrix}
 x_2\\
 y_2\\
 z_2\\
\end{pmatrix}
&=&\begin{pmatrix}
   x_1\\
   y_1\\
   z_1\\
  \end{pmatrix}+A^{y_1}B^{z_1}
\begin{pmatrix}
x_2\\
y_2\\
z_2
\end{pmatrix}
=\begin{pmatrix}
  x_1\\
  y_1\\
  z_1\\
 \end{pmatrix}
+
\begin{pmatrix}
1 & ay_1+cz_1 & by_1+dz_1\\
0 & 1 & 0\\
0 & 0 & 1
\end{pmatrix}
\begin{pmatrix}
x_2\\
y_2\\
z_2
\end{pmatrix}\\[6pt]
&=&\begin{pmatrix}
  x_1\\
  y_1\\
  z_1\\
 \end{pmatrix}
+
\begin{pmatrix}
1 & 1 & 0\\
0 & 1 & 0\\
0 & 0 & 1
\end{pmatrix}^{ay_1+cz_1}
\begin{pmatrix}
1 & 0 & 1\\
0 & 1 & 0\\
0 & 0 & 1
\end{pmatrix}^{by_1+dz_1}
\begin{pmatrix}
x_2\\
y_2\\
z_2
\end{pmatrix}.
\end{eqnarray*}
Observe that the vectors $(a,c)$ and $(b,d)$ must be linearly independent for the brace to have 
socle of order $p$. Then, after this rearrangement, 
the isomorphism condition becomes
$$
F\begin{pmatrix}
1 & 1 & 0\\
0 & 1 & 0\\
0 & 0 & 1
\end{pmatrix}^{a'y+c'z}
\begin{pmatrix}
1 & 0 & 1\\
0 & 1 & 0\\
0 & 0 & 1
\end{pmatrix}^{b'y+d'z}=
\begin{pmatrix}
1 & 1 & 0\\
0 & 1 & 0\\
0 & 0 & 1
\end{pmatrix}^{(0,a,c)F(x,y,z)^t}
\begin{pmatrix}
1 & 0 & 1\\
0 & 1 & 0\\
0 & 0 & 1
\end{pmatrix}^{(0,b,d)F(x,y,z)^t}F.
$$
Assume $F=(f_{ij})_{i,j}$. Computing explicitly the right side, we get
\begin{eqnarray*}
F\begin{pmatrix}
1 & 1 & 0\\
0 & 1 & 0\\
0 & 0 & 1
\end{pmatrix}^{a'y+c'z}
\begin{pmatrix}
1 & 0 & 1\\
0 & 1 & 0\\
0 & 0 & 1
\end{pmatrix}^{b'y+d'z}
=\begin{pmatrix}
f_{11} & f_{11}(0,a',c')\begin{pmatrix}x\\y\\z\end{pmatrix}+f_{12} \hspace{6pt}& f_{11}(0,b',d')\begin{pmatrix}x\\y\\z\end{pmatrix}+f_{13}\vspace{6pt}\\
f_{21} & f_{22}+f_{21}(0,a',c')\begin{pmatrix}x\\y\\z\end{pmatrix} & f_{23}+f_{21}(0,b',d')\begin{pmatrix}x\\y\\z\end{pmatrix}\vspace{6pt}\\
f_{31} & f_{32}+f_{31}(0,a',c')\begin{pmatrix}x\\y\\z\end{pmatrix} & f_{33}+f_{31}(0,b',d')\begin{pmatrix}x\\y\\z\end{pmatrix}
\end{pmatrix}.
\end{eqnarray*}
Doing the same with the left side, we obtain
\begin{eqnarray*}
\lefteqn{\begin{pmatrix}
1 & 1 & 0\\
0 & 1 & 0\\
0 & 0 & 1
\end{pmatrix}^{\begin{matrix}(0,a,c) F\end{matrix}\begin{pmatrix}x\\y\\z\end{pmatrix}}
\begin{pmatrix}
1 & 0 & 1\\
0 & 1 & 0\\
0 & 0 & 1
\end{pmatrix}^{\begin{matrix}(0,b,d) F\end{matrix}\begin{pmatrix}x\\y\\z\end{pmatrix}}F}\\[7pt]
&=&\begin{pmatrix}
f_{11} \hspace{9pt}& f_{12}+f_{22}(0,a,c) F\begin{pmatrix}x\\y\\z\end{pmatrix}+f_{32}(0,b,d) F\begin{pmatrix}x\\y\\z\end{pmatrix} 
\hspace{9pt}& f_{13}+f_{23}(0,a,c) F\begin{pmatrix}x\\y\\z\end{pmatrix}+f_{33}(0,b,d) F\begin{pmatrix}x\\y\\z\end{pmatrix}\vspace{6pt}\\
f_{21} & f_{22} & f_{23}\\
f_{31} & f_{32} & f_{33}
\end{pmatrix}.
\end{eqnarray*}
Equating each entry of the two matrices, the second and the third row tell us that $f_{21}=f_{31}=0$. The first row tells us that
$$
f_{11}\begin{pmatrix}
a' & c' \\
b' & d' \\
\end{pmatrix}=
G^t\begin{pmatrix}
a & c \\
b & d \\
\end{pmatrix} G,
$$
where $G=\begin{pmatrix}
f_{22} & f_{23} \\
f_{32} & f_{33} \\
\end{pmatrix}\in GL_2(\Z/(p))$. In particular, $f_{12}$ and $f_{13}$ do not appear anywhere, so we may take $f_{12}=f_{13}=0$.

Using $f_{11}$, we can multiply the matrix $\begin{pmatrix}a&b\\c&d\\\end{pmatrix}$ by any element 
of $\Z/(p)$ different from zero. On the other hand, to see the effect of the matrix $G$, we will consider
the different elementary matrices, which are generators of $GL_2(\Z/(p))$.
\begin{enumerate}[(i)]
\item When $
G=\begin{pmatrix}
0 & 1 \\
1 & 0 \\
\end{pmatrix},~~G^t\begin{pmatrix}
a & c \\
b & d \\
\end{pmatrix} G=
\begin{pmatrix}
a & b \\
c & d \\
\end{pmatrix}.
$
\item When $
G=\begin{pmatrix}
\mu & 0 \\
0 & 1 \\
\end{pmatrix},~\mu\in\Z/(p)\setminus\{0\},~~G^t\begin{pmatrix}
a & c \\
b & d \\
\end{pmatrix} G=
\begin{pmatrix}
\mu^2 a & \mu c \\
\mu b & d \\
\end{pmatrix}.
$
\item When $
G=\begin{pmatrix}
1 & 0 \\
0 & \mu \\
\end{pmatrix},~\mu\in\Z/(p)\setminus\{0\},~~G^t\begin{pmatrix}
a & c \\
b & d \\
\end{pmatrix} G=
\begin{pmatrix}
a & \mu c \\
\mu b & \mu^2 d \\
\end{pmatrix}.
$
\item When $
G=\begin{pmatrix}
1 & \mu \\
0 & 1 \\
\end{pmatrix},~\mu\in\Z/(p),~~G^t\begin{pmatrix}
a & c \\
b & d \\
\end{pmatrix} G=
\begin{pmatrix}
a & c+\mu a \\
b+\mu a & d+\mu(b+c)+\mu^2 a \\
\end{pmatrix}.
$
\item When $
G=\begin{pmatrix}
1 & 0 \\
\mu & 1 \\
\end{pmatrix},~\mu\in\Z/(p),~~G^t\begin{pmatrix}
a & c \\
b & d \\
\end{pmatrix} G=
\begin{pmatrix}
a+\mu(b+c)+\mu^2 d & c+\mu d \\
b+\mu d & d \\
\end{pmatrix}.
$
\end{enumerate}

The extreme case is $a=d=0$ and $c=-b$. Then we can use $f_{11}$ to obtain $b=-1$ and $c=1$. In all other 
cases, we can use changes of type (i), (iv) and (v) to turn $b$ into 0. Then, we use $f_{11}$ to turn 
$d$ into 1, and changes of type (ii) to turn $a$ into 1 or $\varepsilon$, depending if $a$ is either a 
square or not. The value of $c$ can be changed to $-c$ using a change of type (iii) with $\mu=-1$.

When $p=2$, there are the following non-isomorphic cases
\begin{enumerate}
\item $
\begin{pmatrix}
a & c \\
b & d \\
\end{pmatrix}=
\begin{pmatrix}
0 & 1 \\
1 & 0 \\
\end{pmatrix},
$
\item $
\begin{pmatrix}
a & c \\
b & d \\
\end{pmatrix}=
\begin{pmatrix}
1 & 0 \\
0 & 1 \\
\end{pmatrix},
$
\item $
\begin{pmatrix}
a & c \\
b & d \\
\end{pmatrix}=
\begin{pmatrix}
1 & 1 \\
0 & 1 \\
\end{pmatrix}.
$
\end{enumerate}

When $p\neq 2$, fix an element $\varepsilon$ which is not a square in $\Z/(p)$. 
Then, there are the following non-isomorphic cases
\begin{enumerate}
\item $
\begin{pmatrix}
a & c \\
b & d \\
\end{pmatrix}=
\begin{pmatrix}
0 & -1 \\
1 & 0 \\
\end{pmatrix},
$
\item $
\begin{pmatrix}
a & c \\
b & d \\
\end{pmatrix}=
\begin{pmatrix}
1 & \lambda \\
0 & 1 \\
\end{pmatrix},
$
for each
$
\lambda\in\{0,1,\dots,\frac{p-1}{2}\},
$
\item $
\begin{pmatrix}
a & c \\
b & d \\
\end{pmatrix}=
\begin{pmatrix}
\varepsilon & \lambda \\
0 & 1 \\
\end{pmatrix},
$
for each
$
\lambda\in\{0,1,\dots,\frac{p-1}{2}\}.
$
\end{enumerate}
We assert that these three families give rise to non-isomorphic braces. Changes on the first family keeps the 
first entry always equal to zero, so it is non-isomorphic with the other two families. Compare now the determinant 
of a change of the second family and of the third family:
$$
\det\left(f_{11}^{-1}G^t\begin{pmatrix}1&\lambda\\0&1\end{pmatrix}G\right)=
f_{11}^{-2}(\det G)^2\text{, which is a square,}
$$
$$
\det\left(f_{11}^{-1}G^t\begin{pmatrix}\varepsilon &\lambda\\0&1\end{pmatrix}G\right)=
\varepsilon f_{11}^{-2}(\det G)^2\text{, which is not a square,}
$$
so they must give non-isomorphic braces. Finally, we see that matrices of the second family with 
different values of $\lambda$ give non-isomorphic braces.
Take two elements $\lambda$ and $\lambda'$ of $\{0,1,\dots,(p-1)/2\}$. If they represent
two isomorphic braces, then 
$$
f_{11}\begin{pmatrix}1& \lambda'\\0& 1\end{pmatrix}=G^t\begin{pmatrix}1& \lambda\\0& 1\end{pmatrix}G,
$$
and comparing determinants, we see that $f_{11}=\pm\det G$. But, multiplying the matrices explicitly, we get
$$
\begin{pmatrix}
f_{11}& \lambda' f_{11}\\
0& f_{11}
\end{pmatrix}=
\begin{pmatrix}
\star& f_{22}f_{23}+\lambda f_{22}f_{33}+f_{32}f_{33}\\
f_{22}f_{23}+\lambda f_{23}f_{32}+f_{32}f_{33}& \star\\
\end{pmatrix},
$$
and subtracting the two shown entries, we get $\lambda' f_{11}=\lambda(f_{22}f_{33}-f_{23}f_{32})=\lambda\det G$.
Thus $\lambda=\pm\lambda'$, and if $\lambda,\lambda'\in\{0,1,\dots,(p-1)/2\}$, then $\lambda=\lambda'$. 
By an analogous argument, matrices of the third family with different $\lambda$'s define 
non-isomorphic braces structures.

Now we compute the multiplicative group. When $p=2$,
\begin{enumerate}[1.]
\item Since $\begin{pmatrix}x\\y\\z\end{pmatrix}^n=\begin{pmatrix}nx\\ny\\nz\end{pmatrix},$ the exponent is $2$, so $(G,\cdot)\cong (\Z/(2))^3$.
\item The multiplication is commutative, and the exponent is $4$, so $(G,\cdot)\cong\Z/(2)\times\Z/(4)$.
\item The multiplication is noncommutative, and $(1,0,0)$ is the unique element of order $2$, so $(G,\cdot)\cong Q$.
\end{enumerate}
When $p\neq 2$, the exponent is always $p$. Then,
\begin{enumerate}[1.]
\item The multiplication is noncommutative, so $(G,\cdot)\cong M(p)$.
\item The multiplication is commutative if and only if $\lambda=0$, so $(G,\cdot)$ is isomorphic to 
$(\Z/(p))^3$ when $\lambda=0$, and to $M(p)$ when $\lambda\neq 0$.
\item The multiplication is commutative if and only if $\lambda=0$, so $(G,\cdot)$ is isomorphic to 
$(\Z/(p))^3$ when $\lambda=0$, and to $M(p)$ when $\lambda\neq 0$.
\end{enumerate}

\subsubsection{$G/\soc(G)$ is of type (v)}
\paragraph{Case $p\neq 2$.}

In this case, $\sigma$ is determined by two matrices $A$ and $B$ in $GL_3(\Z/(p))$ of order $p$ 
such that $AB=BA$ in the following way
$$
\begin{array}{cccc}
\sigma:& (G/\soc(G),\cdot)\cong (\Z/(p))^2&\longrightarrow &GL_3(\Z/(p))\\
		&(1,0) &\mapsto &A\\
		&(0,1) &\mapsto &B\\
		&(x,y)=(1,0)^{x-C(y,2)}(0,1)^y &\mapsto &A^{x-C(y,2)}B^y.
\end{array}
$$
On the other hand, $h$ is determined by two linearly independent vectors $h_1$ and 
$h_2$ of $(\Z/(p))^3$ such that $h_1 A=h_1$, $h_2 A=h_2$, $h_1 B=h_1+h_2$, $h_2 B=h_2$.

By the properties of $h_1$ and $h_2$, and taking into account that 
$A$ and $B$ have order $p$, after a change of basis, we reduce to the case
$h_1=(0,1,0)$, $h_2=(0,0,1)$,
$A=\begin{pmatrix}
1 & a & b\\
0 & 1 & 0\\
0 & 0 & 1
\end{pmatrix}$ and
$B=\begin{pmatrix}
1 & c & d\\
0 & 1 & 1\\
0 & 0 & 1
\end{pmatrix}.$ But these two matrices commute if and only if $a=0$. Now, $b\neq 0$, so using the matrix
$F=\begin{pmatrix}
b & d & 0\\
0 & 1 & 0\\
0 & 0 & 1
\end{pmatrix},$ we can reduce to 
$h_1=(0,1,0)$, $h_2=(0,0,1)$,
$A=\begin{pmatrix}
1 & 0 & 1\\
0 & 1 & 0\\
0 & 0 & 1
\end{pmatrix}$ and
$B=\begin{pmatrix}
1 & c & 0\\
0 & 1 & 1\\
0 & 0 & 1
\end{pmatrix}.$
Then, the multiplication is given by
\begin{eqnarray*}
\begin{pmatrix}
 x_1\\
 y_1\\
 z_1\\
\end{pmatrix}\cdot
\begin{pmatrix}
 x_2\\
 y_2\\
 z_2\\
\end{pmatrix}
&:=&\begin{pmatrix}
   x_1\\
   y_1\\
   z_1\\
  \end{pmatrix}+\sigma(h(x_1,y_1,z_1))\begin{pmatrix}x_2\\y_2\\z_2\end{pmatrix}=
  \begin{pmatrix}
   x_1\\
   y_1\\
   z_1\\
  \end{pmatrix}+\sigma(y_1,z_1)\begin{pmatrix}x_2\\y_2\\z_2\end{pmatrix}\\[7pt]
&=&
\begin{pmatrix}
   x_1\\
   y_1\\
   z_1\\
  \end{pmatrix}+A^{y_1-C(z_1, 2)}B^{z_1}
\begin{pmatrix}
x_2\\
y_2\\
z_2
\end{pmatrix}.
\end{eqnarray*}

Two of this structures are isomorphic if 
$$
FA^{y-C(z,2)}B^{z}=
A^{(0,1,0) F (x,y,z)^t-C((0,0,1) F (x,y,z)^t,~ 2)}{B'}^{(0,0,1) F (x,y,z)^t}F.
$$
Assume $F=(f_{ij})$, $B=\begin{pmatrix}
1 & c & 0\\
0 & 1 & 1\\
0 & 0 & 1
\end{pmatrix}$, 
$B'=\begin{pmatrix}
1 & b & 0\\
0 & 1 & 1\\
0 & 0 & 1
\end{pmatrix}.$ 
Putting $(x,y,z)=(1,0,0)$, the isomorphism condition becomes
$$
\Id=A^{f_{21}-C(f_{31}, 2)}{B'}^{f_{31}},
$$
and we obtain $f_{21}=f_{31}=0$. 
For $(x,y,z)=(0,1,0)$,
$$
F A=A^{f_{22}-C(f_{32}, 2)}{B'}^{f_{32}}F,
$$ 

$$
F A=
\begin{pmatrix}
f_{11} & f_{12} & f_{11}+f_{13}\\
0 & f_{22} & f_{23}\\
0 & f_{32} & f_{33}
\end{pmatrix},
$$

\begin{eqnarray*}
A^{f_{22}-C(f_{32}, 2)}{B'}^{f_{32}}F&=&
\begin{pmatrix}
1 & bf_{32} & f_{22}-C(f_{32}, 2)+b C(f_{32}, 2)\\
0 & 1 & f_{32}\\
0 & 0 & 1
\end{pmatrix}
\begin{pmatrix}
f_{11} & f_{12} & f_{13}\\
0 & f_{22} & f_{23}\\
0 & f_{32} & f_{33}
\end{pmatrix}\\[6pt]
&=&\begin{pmatrix}
f_{11} & \star & f_{13}+f_{22}f_{33}\\
0 & f_{22}+f_{32}^2 & f_{23}+f_{32}f_{33}\\
0 & f_{32} & f_{33}
\end{pmatrix},
\end{eqnarray*}
and we obtain $f_{32}=0$ and $f_{11}=f_{22}f_{33}$. 
For $(x,y,z)=(0,0,1)$, 
$$
F B=A^{f_{23}-C(f_{33}, 2)}{B'}^{f_{33}}F,
$$ 

$$
F B=
\begin{pmatrix}
f_{11} & f_{12}+cf_{11} & \star\\
0 & f_{22} & \star\\
0 & 0 & \star
\end{pmatrix},
$$

$$
A^{f_{23}-C(f_{33}, 2)}{B'}^{f_{33}}F=
\begin{pmatrix}
1 & bf_{33} & f_{23}-C(f_{33}, 2)+b C(f_{33}, 2)\\
0 & 1 & f_{33}\\
0 & 0 & 1
\end{pmatrix}
\begin{pmatrix}
f_{11} & f_{12} & f_{13}\\
0 & f_{22} & f_{23}\\
0 & 0 & f_{33}
\end{pmatrix}
=\begin{pmatrix}
f_{11} & f_{12}+bf_{22}f_{33} & \star\\
0 & f_{22} & \star\\
0 & 0 & \star
\end{pmatrix},
$$
and we obtain $cf_{11}=bf_{22}f_{33}$.
Then we have that $F$ is a upper-triangular matrix, and we need 
$f_{11},f_{22},f_{33}\neq 0$ for $F$ to be invertible. 
Dividing $cf_{11}=bf_{22}f_{33}$ by $f_{11}=f_{22}f_{33}$, we obtain $b=c$.

In conclusion, for each $c\in\Z/(p)$, there is a left brace with multiplication
$$
\begin{pmatrix}
 x_1\\
 y_1\\
 z_1\\
\end{pmatrix}\cdot
\begin{pmatrix}
 x_2\\
 y_2\\
 z_2\\
\end{pmatrix}
:=\begin{pmatrix}
   x_1\\
   y_1\\
   z_1\\
  \end{pmatrix}+A^{y_1-C(z_1, 2)}B^{z_1}
\begin{pmatrix}
x_2\\
y_2\\
z_2
\end{pmatrix}
=\begin{pmatrix}
  x_1\\
  y_1\\
  z_1\\
 \end{pmatrix}
+
\begin{pmatrix}
1 & cz_1 & y_1+(c-1)C(z_1, 2)\\
0 & 1 & z_1\\
0 & 0 & 1
\end{pmatrix}
\begin{pmatrix}
x_2\\
y_2\\
z_2
\end{pmatrix}.
$$
All these braces are mutually non-isomorphic, and all the braces with these properties are of this form.

This multiplication is commutative if and only if $c=1$. When $p=3$, if $c=0$, the exponent is $3$, so $(G,\cdot)\cong M(3)$.
If $c\neq 0$, there are elements of order $9$ (for instance, $(0,0,1)$), and then $(G,\cdot)$ is isomorphic 
to $\Z/(3)\times\Z/(9)$ when $c=1$, and 
to $M_3(3)$ when $c\neq 0,1$. When $p\neq 3$, $(G,\cdot)$ has exponent $p$, and then it is isomorphic to $(\Z/(p))^3$
when $c=1$, and to $M(p)$ when $c\neq 1$.

\paragraph{Case $p=2$.} In this case, $(G/\soc(G),\cdot)\cong \Z/(4)$ and $(G/\soc(G),+)\cong \Z/(2)\times\Z/(2)$.
First of all, we need a matrix $A\in GL_3(\Z/(2))$ of order 4. Using the isomorphism condition, $A$ may be taken of Jordan form. 
The only matrix of order $4$ of this type is 
$A=\begin{pmatrix}
1 & 1 & 0\\
0 & 1 & 1\\
0 & 0 & 1
\end{pmatrix}.$

On the other hand, we need two vectors $h_1$ and $h_2$ of $(\Z/(2))^3$ such that $h_1 A=h_1+h_2$ and 
$h_2 A=h_2$. This is only possible for our $A$
if $h_1=(0,1,k)$ and $h_2=(0,0,1)$. But, when $k=1$, using
$F=\begin{pmatrix}
1 & 1 & 0\\
0 & 1 & 1\\
0 & 0 & 1
\end{pmatrix}
$, $A$ remains unchanged and $h_1$ and $h_2$ become $h_1=(0,1,0)$ and $h_2=(0,0,1)$.

In conclusion, there is a unique left brace with these properties up to isomorphism, with multiplication
$$
\begin{pmatrix}
 x_1\\
 y_1\\
 z_1\\
\end{pmatrix}\cdot 
\begin{pmatrix}
 x_2\\
 y_2\\
 z_2\\
\end{pmatrix}
:=\begin{pmatrix}
   x_1\\
   y_1\\
   z_1\\
  \end{pmatrix}+
\begin{pmatrix}
1 & z_1 & y_1\\
0 & 1 & z_1\\
0 & 0 & 1
\end{pmatrix}
\begin{pmatrix}
x_2 \\
y_2 \\
z_2
\end{pmatrix}.
$$

This multiplication is commutative, and has elements of order $4$, like $(0,0,1)$,
 but not of order $8$, so $(G,\cdot)\cong\Z/(2)\times\Z/(4)$.

\subsection{Trivial socle}
When the socle is trivial, the morphism $\lambda: G\to GL_3(\Z/(p))$
is injective. Since $\lambda(G)$ is a $p$-group, it is contained in a Sylow $p$-subgroup of 
$GL_3(\Z/(p))$, and we can take this subgroup to be $T_p$. But $\ord{T_p}=p^3$, so in fact $\lambda$
is an isomorphism to $T_p$. We are done if we can 
find a bijective map $\pi:T_p\to (\Z/(p))^3$ such that 
$\pi(AB)=\pi(A)+A\pi(B)$ for all $A$ and $B$ in $T_p$.

Suppose that $\pi^{-1}(1,0,0)=
\begin{pmatrix}
1 & a & b\\
0 & 1 & c\\
0 & 0 & 1
\end{pmatrix}.
$
It cannot happen that $\begin{pmatrix}1\\0\\0\end{pmatrix}\cdot \begin{pmatrix}x\\y\\z\end{pmatrix}=\begin{pmatrix}1\\0\\0\end{pmatrix}+\lambda_{(1,0,0)}(x,y,z)=\begin{pmatrix}x\\y\\z\end{pmatrix}$ for some $\begin{pmatrix}x\\y\\z\end{pmatrix}$. 
Equivalently, the system of linear equations on $x$, $y$ and $z$
$$
\begin{pmatrix}
1\\
0\\
0
\end{pmatrix}+\begin{pmatrix}
0 & a & b\\
0 & 0 & c\\
0 & 0 & 0
\end{pmatrix}
\begin{pmatrix}
x\\
y\\
z
\end{pmatrix}=0
$$
cannot have solutions. This gives $a=0$ and $c\neq 0$. 

To compute the matrix corresponding to the vector $\begin{pmatrix}x\\0\\0\end{pmatrix}$, observe that 
$\begin{pmatrix}1\\0\\0\end{pmatrix}+\begin{pmatrix}1\\0\\0\end{pmatrix}=
\begin{pmatrix}1\\0\\0\end{pmatrix}\cdot \begin{pmatrix}1\\0\\0\end{pmatrix}$ 
because
$$
\begin{pmatrix}1\\0\\0\end{pmatrix}+\begin{pmatrix}1\\0\\0\end{pmatrix}=\begin{pmatrix}1\\0\\0\end{pmatrix}\cdot\lambda^{-1}_{(1,0,0)}(1,0,0)=\begin{pmatrix}1\\0\\0\end{pmatrix}\cdot 
\left(
\begin{pmatrix}
1 & 0 & -b\\
0 & 1 & -c\\
0 & 0 & 1
\end{pmatrix}
\begin{pmatrix}
1\\
0\\
0
\end{pmatrix}
\right)
=\begin{pmatrix}1\\0\\0\end{pmatrix}\cdot \begin{pmatrix}1\\0\\0\end{pmatrix}.
$$
Then, 
$$
\begin{pmatrix}x\\0\\0\end{pmatrix}=\begin{pmatrix}1\\0\\0\end{pmatrix}+\overset{(x}{\cdots}+\begin{pmatrix}1\\0\\0\end{pmatrix}=
\begin{pmatrix}1\\0\\0\end{pmatrix}\cdot\overset{(x}{\dots}\cdot \begin{pmatrix}1\\0\\0\end{pmatrix}.
$$
Therefore,
$$
\pi^{-1}(x,0,0)=
\begin{pmatrix}
1 & 0 & b\\
0 & 1 & c\\
0 & 0 & 1
\end{pmatrix}^x=
\begin{pmatrix}
1 & 0 & bx\\
0 & 1 & cx\\
0 & 0 & 1
\end{pmatrix}.
$$

Assume now that 
$\pi^{-1}(0,1,0)=\begin{pmatrix}
1 & a' & b'\\
0 & 1 & c'\\
0 & 0 & 1
\end{pmatrix}.
$
The condition $\begin{pmatrix}0\\1\\0\end{pmatrix}\cdot\begin{pmatrix}x\\y\\z\end{pmatrix}\neq \begin{pmatrix}x\\y\\z\end{pmatrix}$,
 for all $\begin{pmatrix}x\\y\\z\end{pmatrix}$, gives $a'c'=0$.
On the other hand,
$$
\begin{pmatrix}1\\1\\0\end{pmatrix}=\begin{pmatrix}1\\0\\0\end{pmatrix}+\begin{pmatrix}0\\1\\0\end{pmatrix}=
\begin{pmatrix}1\\0\\0\end{pmatrix}\cdot \begin{pmatrix}0\\1\\0\end{pmatrix},
$$
so
$$
\pi^{-1}(1,1,0)=
\begin{pmatrix}
1 & 0 & b\\
0 & 1 & c\\
0 & 0 & 1
\end{pmatrix}
\begin{pmatrix}
1 & a' & b'\\
0 & 1 & c'\\
0 & 0 & 1
\end{pmatrix}=
\begin{pmatrix}
1 & a' & b+b'\\
0 & 1 & c+c'\\
0 & 0 & 1
\end{pmatrix}.
$$
The condition $(1,1,0)\cdot (x,y,z)\neq (x,y,z)$ for all $(x,y,z)$ gives $a'(c+c')=0$. 
But we know that $a'c'=0$ and $c\neq 0$, so we obtain $a'=0$ and $b'\neq 0$.
To compute $\pi^{-1}(0,y,0)$, observe that $(0,1,0)+(0,1,0)=(0,1,0)\cdot (0,1,0)$ 
because 
$$
\begin{pmatrix}0\\1\\0\end{pmatrix}+\begin{pmatrix}0\\1\\0\end{pmatrix}=\begin{pmatrix}0\\1\\0\end{pmatrix}\cdot\lambda^{-1}_{(0,1,0)}(0,1,0)=\begin{pmatrix}0\\1\\0\end{pmatrix}\cdot
\left(
\begin{pmatrix}
1 & 0 & -b'\\
0 & 1 & -c'\\
0 & 0 & 1
\end{pmatrix}
\begin{pmatrix}
0\\
1\\
0
\end{pmatrix}
\right)
=\begin{pmatrix}0\\1\\0\end{pmatrix}\cdot \begin{pmatrix}0\\1\\0\end{pmatrix}.
$$
Then,
$$
(0,y,0)=(0,1,0)+\overset{(y}{\cdots}+(0,1,0)=(0,1,0)\cdot\overset{(y}{\dots}\cdot (0,1,0).
$$
Therefore,
$$
\pi^{-1}(0,y,0)=
\begin{pmatrix}
1 & 0 & b'\\
0 & 1 & c'\\
0 & 0 & 1
\end{pmatrix}^y=
\begin{pmatrix}
1 & 0 & b'y\\
0 & 1 & c'y\\
0 & 0 & 1
\end{pmatrix}.
$$

To compute the matrix corresponding to $(x,y,0)$, observe that 
$$(x,y,0)=(x,0,0)+(0,y,0)=(x,0,0)\cdot \lambda^{-1}_{(x,0,0)}(0,y,0)=(x,0,0)\cdot (0,y,0).$$
Thus $(x,y,0)$ has to be assigned to the matrix
$$
\begin{pmatrix}
1 & 0 & xb\\
0 & 1 & xc\\
0 & 0 & 1
\end{pmatrix}
\begin{pmatrix}
1 & 0 & yb'\\
0 & 1 & yc'\\
0 & 0 & 1
\end{pmatrix}=
\begin{pmatrix}
1 & 0 & xb+yb'\\
0 & 1 & xc+yc'\\
0 & 0 & 1
\end{pmatrix}.
$$
Conversely,
$$
\pi\begin{pmatrix}
1 & 0 & x\\
0 & 1 & y\\
0 & 0 & 1
\end{pmatrix}=
\left( 
\left(\begin{pmatrix}
b & b' \\
c & c' \\
\end{pmatrix}^{-1}
\begin{pmatrix}
x \\
y \\
\end{pmatrix}\right)^t,0
\right)=
(K(c'x-b'y),K(by-cx),0),$$
where $K=(bc'-b'c)^{-1}$.

If
$
\pi\begin{pmatrix}
1 & 1 & 0\\
0 & 1 & 0\\
0 & 0 & 1
\end{pmatrix}
=(x_0,y_0,z_0)$ with $z_0\neq 0$, we can compute all the other values of $\pi$
as follows
\begin{eqnarray*}
\pi\begin{pmatrix}
1 & n & x\\
0 & 1 & y\\
0 & 0 & 1
\end{pmatrix}&=&
\pi\left(\begin{pmatrix}
1 & 0 & x\\
0 & 1 & y\\
0 & 0 & 1
\end{pmatrix}
\begin{pmatrix}
1 & n & 0\\
0 & 1 & 0\\
0 & 0 & 1
\end{pmatrix}\right)\\[6pt]
&=&\pi\left(\begin{pmatrix}
1 & 0 & x\\
0 & 1 & y\\
0 & 0 & 1
\end{pmatrix}\right)+
\begin{pmatrix}
1 & 0 & x\\
0 & 1 & y\\
0 & 0 & 1
\end{pmatrix}
\begin{pmatrix}
nx_0+C(n, 2)y_0 \\
ny_0 \\
nz_0 
\end{pmatrix}\\[6pt]
&=&\begin{pmatrix}
(c'x-b'y)K+nx_0+C(n, 2)y_0+xnz_0\\
(by-cx)K+ny_0+nyz_0\\
nz_0
\end{pmatrix},
\end{eqnarray*}
with $K=(bc'-b'c)^{-1}\neq 0$ and $b',c,z_0\neq 0$.
Summarizing, we have used some necessary conditions in order to find 
res\-trictions to the possible definitions of $\pi$.
We will see now if any of these maps give rise to a left brace. The answer depends on $p$.

When $p\neq 2$, $\pi$ does not satisfy $\pi(AB)=\pi(A)+A\pi(B)$. Take first
$
A_1=\begin{pmatrix}
1 & 1 & 0\\
0 & 1 & 0\\
0 & 0 & 1
\end{pmatrix}
$ and
$B_1=\begin{pmatrix}
1 & 0 & 0\\
0 & 1 & 1\\
0 & 0 & 1
\end{pmatrix}
$. Then, 
$$
\pi(A_1 B_1)=\pi
\begin{pmatrix}
1 & 1 & 1\\
0 & 1 & 1\\
0 & 0 & 1
\end{pmatrix}=
\begin{pmatrix}
(c'-b')K+x_0+z_0\\
(b-c)K+y_0+z_0\\
z_0
\end{pmatrix},
$$
and, on the other side,
$$
\pi(A_1)+A_1\pi(B_1)=
\begin{pmatrix}
x_0\\
y_0\\
z_0
\end{pmatrix}+
\begin{pmatrix}
1 & 1 & 0\\
0 & 1 & 0\\
0 & 0 & 1
\end{pmatrix}
\begin{pmatrix}
-b'K \\
bK \\
0 
\end{pmatrix}=
\begin{pmatrix}
x_0-b'K+bK\\
y_0+bK\\
z_0
\end{pmatrix}.
$$
Looking at the second component, we see that $z_0$ must be equal to $cK$.

Now, take
$
A_2=\begin{pmatrix}
1 & 1 & 0\\
0 & 1 & 0\\
0 & 0 & 1
\end{pmatrix}
$ and
$B_2=\begin{pmatrix}
1 & 0 & 1\\
0 & 1 & 0\\
0 & 0 & 1
\end{pmatrix}
$. Then, 
$$
\pi(A_2 B_2)=\pi
\begin{pmatrix}
1 & 1 & 1\\
0 & 1 & 0\\
0 & 0 & 1
\end{pmatrix}=
\begin{pmatrix}
c'K+x_0+z_0\\
-cK+y_0\\
z_0
\end{pmatrix},
$$
and, on the other side,
$$
\pi(A_2)+A_2\pi(B_2)=
\begin{pmatrix}
x_0\\
y_0\\
z_0
\end{pmatrix}+
\begin{pmatrix}
1 & 1 & 0\\
0 & 1 & 0\\
0 & 0 & 1
\end{pmatrix}
\begin{pmatrix}
c'K \\
-cK \\
0 
\end{pmatrix}=
\begin{pmatrix}
x_0+c'K-cK\\
y_0-cK\\
z_0
\end{pmatrix}.
$$
Looking at the first component, we see that $z_0$ must be equal to $-cK$. But
right above we have reasoned that $z_0$ has to be equal to $cK$,
 so we conclude that $z_0=0$, a contradiction.

When $p=2$, we have $K=z_0=b'=c=1$, $bc'=0$ and $y_0=0$ (if $y_0\neq 0$, the element $(x_0,y_0,z_0)$ has order $4$,
a contradiction with the matrix of order $2$ that we have assigned to it), so $\pi$ is simply 
$$
\pi\begin{pmatrix}
1 & n & x\\
0 & 1 & y\\
0 & 0 & 1
\end{pmatrix}=
\begin{pmatrix}
c'x+y+x_0n+xn\\
by+x+ny\\
n
\end{pmatrix}.
$$
We will see that there are four cases in which $\pi$ gives rise to a structure of left brace with trivial 
socle, and that all four of them 
give rise to isomorphic braces. To prove that $\pi$ is bijective, we must check that
$
\pi\begin{pmatrix}
1 & n & x\\
0 & 1 & y\\
0 & 0 & 1
\end{pmatrix}=
\pi\begin{pmatrix}
1 & m & x'\\
0 & 1 & y'\\
0 & 0 & 1
\end{pmatrix}
$ implies $n=m$, $x=x'$ and $y=y'$. We have directly $n=m$ looking at the third component of $\pi$. 
As for the first component and the second component, 
$$
\left\lbrace\begin{array}{rcl}
c'x+y+x_0n+xn&=&c'x'+y'+x_0n+x'n,\\
by+x+ny&=&by'+x'+ny'.
\end{array}\right.
$$
For $\pi$ to be bijective, the only solution to this system of equations has to be $x=x'$ and $y=y'$.
Equivalently, the following linear system of equations in $X$ and $Y$ cannot have a solution different from zero 
$$
\left\lbrace
\begin{array}{lr}
c'X+Y+nX&=0,\\
bY+X+nY&=0.
\end{array}\right.
$$
In other words, the determinant of the matrix associated to the system 
$$
\begin{vmatrix}
1 & b+n\\
c'+n & 1
\end{vmatrix}=1+(b+n)(c'+n)=1+(b+c')n+n^2
$$
has to be different from zero. The last equation has no solution on $n$ 
if and only if $b+c'=1$, or, equivalently, if and only if $b\neq c'$.

Now, to check $\pi(AB)=\pi(A)+A\pi(B)$, take two matrices $A$ and $B$ in $T$, 
and suppose
$A=\begin{pmatrix}
1 & n & x\\
0 & 1 & y\\
0 & 0 & 1
\end{pmatrix}
$
and 
$B=\begin{pmatrix}
1 & m & x'\\
0 & 1 & y'\\
0 & 0 & 1
\end{pmatrix}
$. We have
$$
AB=\begin{pmatrix}
1 & n+m & x+x'+n y'\\
0 & 1 & y+y'\\
0 & 0 & 1
\end{pmatrix},
$$
and then,
$$
\pi(AB)=
\begin{pmatrix}
c'(x+x'+ny')+(y+y')+x_0(n+m)+(x+x'+ny')(n+m)\\
b(y+y')+(x+x'+ny')+(n+m)(y+y')\\
n+m
\end{pmatrix}.
$$ 
On the other side,
\begin{eqnarray*}
\pi(A)+A\pi(B)&=&
\begin{pmatrix}
c'x+y+x_0n+xn\\
by+x+ny\\
n
\end{pmatrix}+
\begin{pmatrix}
1 & n & x\\
0 & 1 & y\\
0 & 0 & 1
\end{pmatrix}
\begin{pmatrix}
c'x'+y'+x_0m+mx' \\
by'+x'+my' \\
m 
\end{pmatrix}\\[7pt]
&=&\begin{pmatrix}
c'x+y+x_0n+xn+c'x'+y'+x_0m+mx'+n(by'+x'+my')+xm\\
by+x+ny+by'+x'+my'+ym\\
n+m
\end{pmatrix}.
\end{eqnarray*}
It is immediate to check that the second and the third components of each expression are equal. For the first one, 
we obtain $y'n(1+c'+b)=0$. Since $c'+ b=1$, this is always true in this case.

With the conditions $c\neq 0$, $b'\neq 0$, $c'\neq b$, $c'b=0$, $y_0=0$ and $z_0\neq 0$, there are
four possible cases in which $\pi$ give rise to a structure of brace:
\begin{enumerate}
\item $x_0=1$, $b=1$, $c'=0$;
\item $x_0=1$, $b=0$, $c'=1$;
\item $x_0=0$, $b=1$, $c'=0$;
\item $x_0=0$, $b=0$, $c'=1$.
\end{enumerate}
The first one gives the following values for $\pi$:
$$
\begin{array}{cc}
(0,0,0)\mapsto\begin{pmatrix}
1 & 0 & 0\\
0 & 1 & 0\\
0 & 0 & 1
\end{pmatrix}, &
(1,0,0)\mapsto\begin{pmatrix}
1 & 0 & 1\\
0 & 1 & 1\\
0 & 0 & 1
\end{pmatrix},\vspace{7pt}\\
(0,1,0)\mapsto\begin{pmatrix}
1 & 0 & 1\\
0 & 1 & 0\\
0 & 0 & 1
\end{pmatrix},&
(0,0,1)\mapsto\begin{pmatrix}
1 & 1 & 0\\
0 & 1 & 1\\
0 & 0 & 1
\end{pmatrix},\vspace{7pt}\\
(1,1,0)\mapsto\begin{pmatrix}
1 & 0 & 0\\
0 & 1 & 1\\
0 & 0 & 1
\end{pmatrix}, &
(1,0,1)\mapsto\begin{pmatrix}
1 & 1 & 0\\
0 & 1 & 0\\
0 & 0 & 1
\end{pmatrix},\vspace{7pt}\\
(0,1,1)\mapsto\begin{pmatrix}
1 & 1 & 1\\
0 & 1 & 0\\
0 & 0 & 1
\end{pmatrix}, &
(1,1,1)\mapsto\begin{pmatrix}
1 & 1 & 1\\
0 & 1 & 1\\
0 & 0 & 1
\end{pmatrix}.\\
\end{array}
$$
The other cases are isomorphic to this one by the morphisms $F_i:G_1\to G_i$, where
\begin{enumerate}[1.]
\setcounter{enumi}{1}
\item $
F_2$ is equal to $\begin{pmatrix}
1 & 1 & 0\\
0 & 1 & 1\\
0 & 0 & 1
\end{pmatrix}$ as a morphism of the additives groups, and equal to  
the conjugation by $\begin{pmatrix}
1 & 1 & 1\\
0 & 1 & 1\\
0 & 0 & 1
\end{pmatrix}
$ as a morphism of the multiplicative groups;
\item $
F_3$ is equal to $\begin{pmatrix}
1 & 0 & 1\\
0 & 1 & 0\\
0 & 0 & 1
\end{pmatrix}$ as a morphism of the additive groups, and equal to 
the conjugation by $\begin{pmatrix}
1 & 1 & 1\\
0 & 1 & 1\\
0 & 0 & 1
\end{pmatrix}$ as a morphism of the multiplicative groups;
\item $
F_4$ is equal to $\begin{pmatrix}
1 & 1 & 1\\
0 & 1 & 1\\
0 & 0 & 1
\end{pmatrix}$ as a morphism of the additive groups, and equal to
the identity as a morphism of the multiplicative groups.
\end{enumerate}

\section{$(G,+)$ isomorphic to $\Z/(p)\times \Z/(p^2)$}
The strategy is the same as that of the previous section. Observe that
 we look for monomorphisms
$\sigma: G/\soc(G)\to \aut(\Z/(p)\times \Z/(p^2))$, and we first should find a
way to present the elements of $\aut(\Z/(p)\times \Z/(p^2))$ in a more convenient 
way to work with them.
$\aut(\Z/(p)\times \Z/(p^2))$ is isomorphic to 
$$M:=
\left\lbrace 
\begin{pmatrix}
x & y \\
p z & t 
\end{pmatrix}:
x,y\in\Z/(p),~ z,t\in\Z/(p^2),~xt\not\equiv 0\pmod{p}\right\rbrace,
$$
with multiplication
$$
\begin{pmatrix}
 x& y\\
 p z& t\\
\end{pmatrix}
\begin{pmatrix}
 a & b\\
 p c & d\\
\end{pmatrix}:=
\begin{pmatrix}
 xa\pmod{p} & xb+yd\pmod{p}\\
 p(za+tc)\pmod{p^2} & td+p zb\pmod{p^2}
\end{pmatrix},
$$
which is the usual multiplication of matrices over $\Z$, followed by a reduction modulo $p$
on the first row and a reduction modulo $p^2$ on the second one.

Observe also that one key step to reduce the computations in our previous cases
 is to move the image of $G/\soc(G)$ by $\sigma$ into
a Sylow $p$-subgroup, so we must compute a suitable Sylow $p$-subgroup of $M$. 
The order of $M$ is equal to
$\ord{M}=(p-1)p^2(p^2-p)=p^3(p-1)^2,$
so the subgroup
$$
M_p:=\left\lbrace 
\begin{pmatrix}
1 & c \\
p a & 1+p b 
\end{pmatrix}:
c\in\Z/(p),~a,b\in\Z/(p^2)\right\rbrace\leq M
$$
is a Sylow $p$-subgroup. Then, we may take $\sigma: G/\soc(G)\to M_p,$ because
$\sigma(G/\soc(G))$ is a $p$-subgroup, so it is contained
in a Sylow $p$-subgroup of $M$, any two 
Sylow $p$-subgroups are conjugate, and conjugate representations 
$\sigma$ give rise to isomorphic brace structures.

\begin{remark}
There is a problem when we work with matrices with entries in different rings, and 
we make computations in them. For instance, we can find an entry of $\Z/(p^2)$ of the form
$\alpha^{-1}$, where $\alpha$ is an element of $\Z/(p)$. To make this kind of expressions formally correct, 
during the computations we assume that the entries of our matrices are all over the $p$-adic numbers $\hat{\Z}_p$,
and at the end we reduce the first row modulo $p$ and the second row modulo $p^2$ to return to our usual group of matrices.
\end{remark}

\subsection{Socle of order $p^3$}
Since $G=\soc(G)$, the only possible structure is the trivial one.

\subsection{Socle of order $p^2$}
$G/\soc(G)$ has order $p$, so it is isomorphic to the trivial brace over $\Z/(p)$. 
The morphism $\sigma$ is determined by a matrix
$A=
\begin{pmatrix}
1 & c  \\
pa & 1+pb \\
\end{pmatrix}
$ in $M_p$ of order $p$, and the morphism $h$, by a non-zero vector $(\alpha,\beta)\in (\Z/(p))^2$, satisfying
$(\alpha,\beta) A=(\alpha,\beta)$. Then, the multiplication is given by
$$
\begin{pmatrix}
 x_1\\
 y_1\\
\end{pmatrix}\cdot
\begin{pmatrix}
 x_2\\
 y_2\\
\end{pmatrix}:=
\begin{pmatrix}
 x_1\\
 y_1\\
\end{pmatrix}
+A^{\alpha x_1+\beta y_1}
\begin{pmatrix}
x_2\\
y_2
\end{pmatrix}.
$$
The condition of isomorphism is 
$$
A^{(\alpha,~\beta)(x,y)^t}=(F^{-1}A'F)^{(\alpha',~\beta')F(x,y)^t},
$$
for some $F\in M.$

To reduce the possible multiplications, we have to see which elements of $M_p$ are conjugate by some element
of $M$. If 
$F=\begin{pmatrix}
X & Y  \\
pZ & T \\
\end{pmatrix}\in M,$ then
\begin{eqnarray*}
F^{-1}AF&=&
\begin{pmatrix}
X^{-1}& -YX^{-1}T^{-1}\\
-pZX^{-1}T^{-1}& T^{-1}+pZYX^{-1}T^{-2}\\
\end{pmatrix}
\begin{pmatrix}
1& c\\
pa& 1+pb\\
\end{pmatrix}
\begin{pmatrix}
X& Y\\
pZ& T\\
\end{pmatrix}\\[7pt]
&=&\begin{pmatrix}
1 & cTX^{-1}  \\
paT^{-1}X & 1+p(b+aT^{-1}Y-cX^{-1}Z) \\
\end{pmatrix}.
\end{eqnarray*}
Observe that, when $a=0$ and $c=0$, the only matrix conjugate to $A$ is $A$ itself. In the other cases, 
we can use the values of $Y$ or of $Z$ to turn $b$ into 0, and the values of $X$ and $T$ to turn 
$a$ or $c$ into 1 (but not both at the same time). For that reason, any element in $M_p$ is conjugate 
to one of the following three matrices:
\begin{enumerate}[(a)]
\item $
\begin{pmatrix}
1 & 0  \\
0 & 1+pb \\
\end{pmatrix},
$ $\forall b\neq 0$;
\item $
\begin{pmatrix}
1 & 0  \\
p & 1 \\
\end{pmatrix}
$; 
\item $
\begin{pmatrix}
1 & 1  \\
pa & 1 \\
\end{pmatrix},
$ $\forall a\in\{0,1,\dots,p-1\}$.
\end{enumerate}

We must determine the possible values for the vector $(\alpha,\beta)$ in each case. 
In the cases (a) and (b), any vector satisfies the condition $(\alpha,\beta) A=(\alpha,\beta)$. 
For the case (c), 
the only elements satisfying this condition are those of the form 
$(\alpha,\beta)=(0,k)$, $k\neq 0$, but using $F=k^{-1}\Id$, 
we can take $(\alpha,\beta)=(0,1)$.

It is also important to observe the effect of $F$ over $(\alpha,\beta)$
$$
(\alpha,\beta)F=(\alpha X,\alpha Y+\beta T).
$$
This allows us to simplify the vector $(\alpha,\beta)$ a bit more in cases (a) and (b). Then there are five possible cases:
\begin{enumerate}
\item $\begin{pmatrix}
 x_1\\
 y_1\\
\end{pmatrix}\cdot
\begin{pmatrix}
 x_2\\
 y_2\\
\end{pmatrix}:=
\begin{pmatrix}
 x_1\\
 y_1\\
\end{pmatrix}+
\begin{pmatrix}
1 & 0  \\
0 & 1+pb \\
\end{pmatrix}^{x_1}\begin{pmatrix}
x_2 \\
y_2
\end{pmatrix},$

\item $\begin{pmatrix}
 x_1\\
 y_1\\
\end{pmatrix}\cdot
\begin{pmatrix}
 x_2\\
 y_2\\
\end{pmatrix}:=
\begin{pmatrix}
 x_1\\
 y_1\\
\end{pmatrix}
+\begin{pmatrix}
1 & 0  \\
0 & 1+pb \\
\end{pmatrix}^{y_1}\begin{pmatrix}
x_2 \\
y_2
\end{pmatrix},$

\item $\begin{pmatrix}
 x_1\\
 y_1\\
\end{pmatrix}\cdot
\begin{pmatrix}
 x_2\\
 y_2\\
\end{pmatrix}:=
\begin{pmatrix}
 x_1\\
 y_1\\
\end{pmatrix}
+\begin{pmatrix}
1 & 0 \\
p & 1 \\
\end{pmatrix}^{y_1}\begin{pmatrix}
x_2 \\
y_2
\end{pmatrix},$

\item $\begin{pmatrix}
 x_1\\
 y_1\\
\end{pmatrix}\cdot
\begin{pmatrix}
 x_2\\
 y_2\\
\end{pmatrix}:=
\begin{pmatrix}
 x_1\\
 y_1\\
\end{pmatrix}
+\begin{pmatrix}
1 & 0 \\
p & 1 \\
\end{pmatrix}^{x_1+ay_1}\begin{pmatrix}
x_2 \\
y_2
\end{pmatrix},$

\item $\begin{pmatrix}
 x_1\\
 y_1\\
\end{pmatrix}\cdot
\begin{pmatrix}
 x_2\\
 y_2\\
\end{pmatrix}:=
\begin{pmatrix}
 x_1\\
 y_1\\
\end{pmatrix}
+\begin{pmatrix}
1 & 1 \\
pa & 1 \\
\end{pmatrix}^{y_1}\begin{pmatrix}
x_2 \\
y_2
\end{pmatrix}.$

\end{enumerate}
Using the isomorphism condition, we can reduce the fourth and the fifth cases a bit more. 
In 4, if $a\not\equiv 0\pmod{p}$,
$
F=\begin{pmatrix}
a & 0 \\
0 &  a^{2}\\
\end{pmatrix}
$ satisfies
$$
F\begin{pmatrix}1& 0\\p& 1\end{pmatrix}^{x+ay}=\begin{pmatrix}1& 0\\p& 1\end{pmatrix}^{(1,1)F(x,y)^t}F,
$$ 
so we may take $a=1$. In 5, if $a\not\equiv 0\pmod{p}$ and $p\neq 2$, there are two possibilities:
if $a$ is a square modulo $p$, 
$
F=\begin{pmatrix}
a & 0 \\
p(a^{1/2}-a)/2 &  a^{1/2}\\
\end{pmatrix}
$ 
satisfies 
$$
F\begin{pmatrix}1& 1\\pa& 1\end{pmatrix}^y=\begin{pmatrix}1& 1\\p& 1\end{pmatrix}^{(0,1)F(x,y)^t}F,
$$
so we may take $a=1$. If $a$ is not a square modulo $p$, fix a non-square element $\varepsilon$
of $\Z/(p)$. Then, 
$$
F=\begin{pmatrix}
\varepsilon^{-1} a & 0 \\
pa ((\varepsilon^{-1} a)^{1/2}-1)/2 &  (\varepsilon^{-1} a)^{1/2}\\
\end{pmatrix}
$$ 
satisfies
$$
F\begin{pmatrix}1& 1\\pa& 1\end{pmatrix}^y=\begin{pmatrix}1& 1\\p\varepsilon& 1\end{pmatrix}^{(0,1)F(x,y)^t}F,
$$
so we may take $a=\varepsilon$.
In conclusion, this gives the non-isomorphic cases

\begin{enumerate}
\item $\begin{pmatrix}
 x_1\\
 y_1\\
\end{pmatrix}\cdot
\begin{pmatrix}
 x_2\\
 y_2\\
\end{pmatrix}:=
\begin{pmatrix}
 x_1\\
 y_1\\
\end{pmatrix}
+\begin{pmatrix}
1 & 0  \\
0 & 1+p \\
\end{pmatrix}^{x_1}\begin{pmatrix}
x_2 \\
y_2
\end{pmatrix},$

\item $\begin{pmatrix}
 x_1\\
 y_1\\
\end{pmatrix}\cdot
\begin{pmatrix}
 x_2\\
 y_2\\
\end{pmatrix}:=
\begin{pmatrix}
 x_1\\
 y_1\\
\end{pmatrix}
+\begin{pmatrix}
1 & 0  \\
0 & 1+p \\
\end{pmatrix}^{y_1}\begin{pmatrix}
x_2 \\
y_2
\end{pmatrix},$

\item $\begin{pmatrix}
 x_1\\
 y_1\\
\end{pmatrix}\cdot
\begin{pmatrix}
 x_2\\
 y_2\\
\end{pmatrix}:=
\begin{pmatrix}
 x_1\\
 y_1\\
\end{pmatrix}
+\begin{pmatrix}
1 & 0 \\
p & 1 \\
\end{pmatrix}^{y_1}\begin{pmatrix}
x_2 \\
y_2
\end{pmatrix},$

\item $\begin{pmatrix}
 x_1\\
 y_1\\
\end{pmatrix}\cdot
\begin{pmatrix}
 x_2\\
 y_2\\
\end{pmatrix}:=
\begin{pmatrix}
 x_1\\
 y_1\\
\end{pmatrix}
+\begin{pmatrix}
1 & 0 \\
p & 1 \\
\end{pmatrix}^{x_1}\begin{pmatrix}
x_2 \\
y_2
\end{pmatrix},$

\item $\begin{pmatrix}
 x_1\\
 y_1\\
\end{pmatrix}\cdot
\begin{pmatrix}
 x_2\\
 y_2\\
\end{pmatrix}:=
\begin{pmatrix}
 x_1\\
 y_1\\
\end{pmatrix}
+\begin{pmatrix}
1 & 0 \\
p & 1 \\
\end{pmatrix}^{x_1+y_1}\begin{pmatrix}
x_2 \\
y_2
\end{pmatrix},$

\item $\begin{pmatrix}
 x_1\\
 y_1\\
\end{pmatrix}\cdot
\begin{pmatrix}
 x_2\\
 y_2\\
\end{pmatrix}:=
\begin{pmatrix}
 x_1\\
 y_1\\
\end{pmatrix}
+\begin{pmatrix}
1 & 1 \\
0 & 1 \\
\end{pmatrix}^{y_1}\begin{pmatrix}
x_2 \\
y_2
\end{pmatrix},$

\item $p\neq 2$, $\begin{pmatrix}
 x_1\\
 y_1\\
\end{pmatrix}\cdot
\begin{pmatrix}
 x_2\\
 y_2\\
\end{pmatrix}:=
\begin{pmatrix}
 x_1\\
 y_1\\
\end{pmatrix}
+\begin{pmatrix}
1 & 1 \\
p & 1 \\
\end{pmatrix}^{y_1}\begin{pmatrix}
x_2 \\
y_2
\end{pmatrix},$

\item $p\neq 2$, $\begin{pmatrix}
 x_1\\
 y_1\\
\end{pmatrix}\cdot
\begin{pmatrix}
 x_2\\
 y_2\\
\end{pmatrix}:=
\begin{pmatrix}
 x_1\\
 y_1\\
\end{pmatrix}
+\begin{pmatrix}
1 & 1 \\
p\varepsilon & 1 \\
\end{pmatrix}^{y_1}\begin{pmatrix}
x_2 \\
y_2
\end{pmatrix}.$
\end{enumerate}
The two last cases are not consider when $p=2$ because the matrix $A$ has order 4.

We compute now the multiplicative group of each case. When $p=2$,
\begin{enumerate}
 \item The multiplication is noncommutative, and $(1,0)$ and $(1,1)$ are two elements of 
 order $2$, so $(G,\cdot)\cong D_4$.
 \item The exponent is $2$, so $(G,\cdot)\cong (\Z/(2))^3$.
 \item The multiplication is noncommutative, and $(1,0)$ and $(1,1)$ are two elements of 
 order $2$, so $(G,\cdot)\cong D_4$.
 \item The multiplication is commutative and the exponent is $4$, so $(G,\cdot)\cong\Z/(2)\times\Z/(4)$.
 \item The multiplication is noncommutative, and $(0,2)$ is the unique element of 
 order $2$, so $(G,\cdot)\cong Q$.
 \item The multiplication is commutative and the exponent is $4$, so $(G,\cdot)\cong\Z/(2)\times\Z/(4)$.
\end{enumerate}
When $p\neq 2$, the exponent is always $p^2$, so
\begin{enumerate}
\item The multiplication is noncommutative, and $(G,\cdot)\cong M_3(p)$.
\item The multiplication is commutative, and $(G,\cdot)\cong\Z/(p)\times\Z/(p^2)$.
\item The multiplication is noncommutative, and $(G,\cdot)\cong M_3(p)$.
\item The multiplication is commutative, and $(G,\cdot)\cong\Z/(p)\times\Z/(p^2)$.
\item The multiplication is noncommutative, and $(G,\cdot)\cong M_3(p)$.
\item The multiplication is commutative, and $(G,\cdot)\cong\Z/(p)\times\Z/(p^2)$.
\item The multiplication is noncommutative, and $(G,\cdot)\cong M_3(p)$.
\item The multiplication is noncommutative, and $(G,\cdot)\cong M_3(p)$.
\end{enumerate}

\subsection{Socle of order $p$}

$G/\soc(G)$ is a brace of order $p^2$, and we have classified the possible structures 
of this type of braces.
But observe that in this case the additive group of $G/\soc(G)$ might be isomorphic to
 $\Z(p^2)$ or to $(\Z/(p))^2$, and thus we have to consider all the types of Proposition~\ref{bracesp2}.

\subsubsection{$G/\soc(G)$ is of type (iv)} We need two matrices $V$ and $W$ in $M_p$ of order $p$ such that $VW=WV$ and 
two linearly independent elements $h_1$ and $h_2$ of $\Z/(p)\times\Z/(p)$, satisfying $h_i W=h_i$ and $h_i V=h_i$ for $i=1,2$. 
Then, the multiplication is given by
$$
\begin{pmatrix}
 x_1\\
 y_1\\
\end{pmatrix}\cdot
\begin{pmatrix}
 x_2\\
 y_2\\
\end{pmatrix}:=
\begin{pmatrix}
 x_1\\
 y_1\\
\end{pmatrix}
+V^{h_1 (x_1,y_1)^t}W^{h_2 (x_1,y_1)^t}
\begin{pmatrix}
x_2\\
y_2
\end{pmatrix}.
$$
Two of these structures are isomorphic if 
$$
V^{h_1 (x,y)^t}W^{h_2 (x,y)^t}=(F^{-1}V'F)^{h'_1 F (x,y)^t}(F^{-1}W'F)^{h'_2 F (x,y)^t},
$$
for some $F\in M.$

By the conditions on $h_1$ and $h_2$, we must have 
$V=\begin{pmatrix}
1 & 0  \\
pa & 1+pb \\
\end{pmatrix}$
and
$W=\begin{pmatrix}
1 & 0  \\
pc & 1+pd \\
\end{pmatrix}.$
Then, if $h_1=(\alpha,\beta)$ and $h_2=(\gamma,\delta)$, we can make a rearrangement of the multiplication
\begin{eqnarray*}
\begin{pmatrix}
 x_1\\
 y_1\\
\end{pmatrix}\cdot
\begin{pmatrix}
 x_2\\
 y_2\\
\end{pmatrix}&:=&
\begin{pmatrix}
 x_1\\
 y_1\\
\end{pmatrix}+
\begin{pmatrix}
1 & 0  \\
pa & 1+pb \\
\end{pmatrix}^{\alpha x_1+\beta y_1}
\begin{pmatrix}
1 & 0  \\
pc & 1+pd \\
\end{pmatrix}^{\gamma x_1+\delta y_1}
\begin{pmatrix}
x_2\\
y_2\\
\end{pmatrix}\\[7pt]
&=&\begin{pmatrix}
  x_1\\
  y_1\\
 \end{pmatrix}
+
\begin{pmatrix}
1 & 0  \\
p(a\alpha+c\gamma)x_1+p(a\beta+c\delta)y_1 & 1+p(b\alpha+d\gamma)x_1+p(b\beta+d\delta)y_1 \\
\end{pmatrix}
\begin{pmatrix}
x_2\\
y_2\\
\end{pmatrix}\\[7pt]
&=&\begin{pmatrix}
  x_1\\
  y_1\\
 \end{pmatrix}
+
\begin{pmatrix}
1 & 0  \\
p & 1 \\
\end{pmatrix}^{Ax_1+Cy_1}
\begin{pmatrix}
1 & 0  \\
0 & 1+p \\
\end{pmatrix}^{Bx_1+Dy_1}
\begin{pmatrix}
x_2\\
y_2\\
\end{pmatrix},
\end{eqnarray*}
where $A=a\alpha+c\gamma$, $C=a\beta+c\delta$, $B=b\alpha+d\gamma$ and $D=b\beta+d\delta$.
The vectors $(A,C)$ and $(B,D)$ must be linearly independent for the brace to have socle
of order $p$. After this rearrangement, the isomorphism condition becomes
$$
F\begin{pmatrix}
1 & 0  \\
p & 1 \\
\end{pmatrix}^{Ax+Cy}
\begin{pmatrix}
1 & 0  \\
0 & 1+p \\
\end{pmatrix}^{Bx+Dy}=
\begin{pmatrix}
1 & 0  \\
p & 1 \\
\end{pmatrix}^{(A',C')F(x,y)^t}
\begin{pmatrix}
1 & 0  \\
0 & 1+p \\
\end{pmatrix}^{(B',D')F(x,y)^t}F.
$$
If $F=
\begin{pmatrix}
X & Y\\
pZ & T
\end{pmatrix},
$
then a direct computation of the left-hand side shows that
$$
F\begin{pmatrix}
1 & 0  \\
p & 1 \\
\end{pmatrix}^{Ax+Cy}
\begin{pmatrix}
1 & 0  \\
0 & 1+p \\
\end{pmatrix}^{Bx+Dy}=
\begin{pmatrix}
X & Y  \\
pZ+pT(A,C)(x,y)^t & T+pT(B,D)(x,y)^t \\
\end{pmatrix}.
$$
And the same type of computation for the right-hand side gives
\begin{eqnarray*}
\lefteqn{\begin{pmatrix}
1 & 0  \\
p & 1 \\
\end{pmatrix}^{(A',C')F(x,y)^t}
\begin{pmatrix}
1 & 0  \\
0 & 1+p \\
\end{pmatrix}^{(B',D')F(x,y)^t}F}\\[7pt]
&=&\begin{pmatrix}
X & Y  \\
pZ+pX(A',C')F(x,y)^t & T+pY(A',C')F(x,y)^t+pT(B',D')F(x,y)^t \\
\end{pmatrix}.
\end{eqnarray*}

Equating each entry of the two matrices, we obtain
$$
T\begin{pmatrix}
A & C  \\
B & D \\
\end{pmatrix}=
\begin{pmatrix}
X & 0  \\
Y & T \\
\end{pmatrix}
\begin{pmatrix}
A' & C'  \\
B' & D' \\
\end{pmatrix}
\begin{pmatrix}
X & Y  \\
0 & T \\
\end{pmatrix},
\text{ over }\Z/(p).
$$

To see how this condition works, we will compute the effect of the elementary matrices,
which generates all the other matrices. For all $\mu\in\Z/(p)\setminus\{0\}$,
\begin{enumerate}[(i)]
\item When $
F=\begin{pmatrix}
\mu & 0 \\
0 & 1 \\
\end{pmatrix},
~~
1^{-1}
\begin{pmatrix}
\mu & 0 \\
0 & 1 \\
\end{pmatrix}
\begin{pmatrix}
A & C \\
B & D \\
\end{pmatrix}
\begin{pmatrix}
\mu & 0 \\
0 & 1 \\
\end{pmatrix}
=
\begin{pmatrix}
\mu^2 A & \mu C \\
\mu B & D \\
\end{pmatrix}.
$

\item When $
F=\begin{pmatrix}
1 & 0 \\
0 & \mu \\
\end{pmatrix},~~
\mu^{-1}
\begin{pmatrix}
1 & 0 \\
0 & \mu \\
\end{pmatrix}
\begin{pmatrix}
A & C \\
B & D \\
\end{pmatrix} 
\begin{pmatrix}
1 & 0 \\
0 & \mu \\
\end{pmatrix}=
\begin{pmatrix}
\mu^{-1}A & C \\
B & \mu D \\
\end{pmatrix}.
$

\item When $
F=\begin{pmatrix}
1 & \mu \\
0 & 1 \\
\end{pmatrix},~~1^{-1}\begin{pmatrix}
1 & 0 \\
\mu & 1 \\
\end{pmatrix}
\begin{pmatrix}
A & C \\
B & D \\
\end{pmatrix}
\begin{pmatrix}
1 & \mu \\
0 & 1 \\
\end{pmatrix}=
\begin{pmatrix}
A & C+\mu A \\
B+\mu A & D+\mu(B+C)+\mu^2 A \\
\end{pmatrix}.
$
\end{enumerate}

When $A=0$, we can use a change of type (i) to turn $B$ into 1. Then, if $D=0$ we are done, and if $D\neq 0$, there 
are two possibilities: if $C\neq-1$, use a change of type (iii) to turn $D$ into 0; if $C=-1$, 
if $D=0$ we are done, if not, use a change of type (ii) to turn $D$ into 1. In all other cases, 
we can use a change of type (iii) to turn $B$ into 0. Then, using changes of type (ii), turn $D$ into 1,
and using changes of type (i), turn $A$ into 1 or $\varepsilon$ depending if $A$ is either a square or not.
In this last case, you can turn the value of $C$ into $-C$ using a change of type (i) with $\mu=-1$.

When $p=2$, there are the following non-isomorphic cases:
\begin{enumerate}
\item $
\begin{pmatrix}
A & C  \\
B & D \\
\end{pmatrix}=
\begin{pmatrix}
1 & 0  \\
0 & 1 \\
\end{pmatrix},
$
\item $
\begin{pmatrix}
A & C  \\
B & D \\
\end{pmatrix}=
\begin{pmatrix}
1 & 1  \\
0 & 1 \\
\end{pmatrix},
$
\item $
\begin{pmatrix}
A & C  \\
B & D \\
\end{pmatrix}=
\begin{pmatrix}
0 & 1  \\
1 & 0 \\
\end{pmatrix},
$
\item $
\begin{pmatrix}
A & C  \\
B & D \\
\end{pmatrix}=
\begin{pmatrix}
0 & 1  \\
1 & 1 \\
\end{pmatrix}.
$
\end{enumerate}

When $p\neq 2$, fix an element $\varepsilon$ which is not a square in $\Z/(p)$. 
Then, there are the following non-isomorphic cases:
\begin{enumerate}
\item
$
\begin{pmatrix}
A & C  \\
B & D \\
\end{pmatrix}=
\begin{pmatrix}
\varepsilon & \lambda  \\
0 & 1 \\
\end{pmatrix},
$ for each $\lambda\in\{0,1,\dots,(p-1)/2\}$;
\item $
\begin{pmatrix}
A & C  \\
B & D \\
\end{pmatrix}=
\begin{pmatrix}
1 & \lambda  \\
0 & 1 \\
\end{pmatrix},
$ for each $\lambda\in\{0,1,\dots,(p-1)/2\}$;
\item $
\begin{pmatrix}
A & C  \\
B & D \\
\end{pmatrix}=
\begin{pmatrix}
0 & \lambda  \\
1 & 0 \\
\end{pmatrix},
$ for each $\lambda\in\Z/(p)\setminus\{0\}$;
\item $
\begin{pmatrix}
A & C  \\
B & D \\
\end{pmatrix}=
\begin{pmatrix}
0 & -1  \\
1 & 1 \\
\end{pmatrix}.
$
\end{enumerate}
To check that they give rise to non-isomorphic braces, we compute a general change matrix.
\begin{enumerate}
\item
$
T^{-1}\begin{pmatrix}
X & 0  \\
Y & T \\
\end{pmatrix}
\begin{pmatrix}
\varepsilon & \lambda  \\
0 & 1 \\
\end{pmatrix}
\begin{pmatrix}
X & Y  \\
0 & T \\
\end{pmatrix}=
\begin{pmatrix}
\displaystyle\frac{X^2}{T}\varepsilon & \displaystyle\frac{XY}{T}\varepsilon+\lambda X\\
\displaystyle\frac{XY}{T}\varepsilon & \displaystyle\frac{Y^2}{T}\varepsilon+\lambda Y+T
\end{pmatrix}
$,\vspace{5pt}
\item $
T^{-1}\begin{pmatrix}
X & 0  \\
Y & T \\
\end{pmatrix}
\begin{pmatrix}
1 & \lambda  \\
0 & 1 \\
\end{pmatrix}
\begin{pmatrix}
X & Y  \\
0 & T \\
\end{pmatrix}=
\begin{pmatrix}
\displaystyle\frac{X^2}{T} & \displaystyle\frac{XY}{T}+\lambda X\\
\displaystyle\frac{XY}{T} & \displaystyle\frac{Y^2}{T}+\lambda Y+T
\end{pmatrix}
$, \vspace{5pt}
\item $
T^{-1}\begin{pmatrix}
X & 0  \\
Y & T \\
\end{pmatrix}
\begin{pmatrix}
0 & \lambda  \\
1 & 0 \\
\end{pmatrix}
\begin{pmatrix}
X & Y  \\
0 & T \\
\end{pmatrix}=
\begin{pmatrix}
0 & \lambda X\\
X & Y(1+\lambda)
\end{pmatrix}
$,\vspace{5pt}
\item $
T^{-1}\begin{pmatrix}
X & 0  \\
Y & T \\
\end{pmatrix}
\begin{pmatrix}
0 & -1  \\
1 & 1 \\
\end{pmatrix}
\begin{pmatrix}
X & Y  \\
0 & T \\
\end{pmatrix}=
\begin{pmatrix}
0 & -X\\
X & T
\end{pmatrix}.
$
\end{enumerate}

Cases 3 and 4 are not isomorphic to cases 1 and 2 because their $(1,1)$-entry is always equal to zero.
Case 3 is not isomorphic to case 4 because if 
$\begin{pmatrix}
0 & -X\\
X & T
\end{pmatrix}
$ was equal to 
$\begin{pmatrix}
0 & \lambda  \\
1 & 0 \\
\end{pmatrix}$, that would imply $T=0$, which is impossible.
Case 1 is not isomorphic to case 2 because, if $
\begin{pmatrix}
\displaystyle\frac{X^2}{T} & \displaystyle\frac{XY}{T}+\lambda X\\
\displaystyle\frac{XY}{T} & \displaystyle\frac{Y^2}{T}+\lambda Y+T
\end{pmatrix}
$ was equal to 
$\begin{pmatrix}
\varepsilon & \lambda  \\
0 & 1 \\
\end{pmatrix}$, then $Y$ would be 0, $T$ would be 1, and $X^2$ would be equal to $\varepsilon$, a
contradiction with the fact that $\varepsilon$ is not a square.
Finally, different values of $\lambda$ in case 1 determine non-isomorphic braces because
$\begin{pmatrix}
1 & \lambda'  \\
0 & 1 \\
\end{pmatrix}=\begin{pmatrix}
\displaystyle\frac{X^2}{T} & \displaystyle\frac{XY}{T}+\lambda X\\
\displaystyle\frac{XY}{T} & \displaystyle\frac{Y^2}{T}+\lambda Y+T
\end{pmatrix}
$ 
implies $Y=0$, $T=1$, $\lambda'=\lambda X$ and $X^2=1$, so $\lambda'=\pm\lambda$, and if 
$\lambda, \lambda'\in\{0,1,\dots,(p-1)/2\}$, then $\lambda'=\lambda$. Analogously, in case 2, different 
values of $\lambda$ determine non-isomorphic braces.

We compute now the multiplicative group of each case. If $p=2$,
\begin{enumerate}
 \item It is abelian and the exponent is $4$, so $(G,\cdot)\cong \Z/(2)\times\Z/(4)$.
 \item It is non-abelian, and there are more than one element of order $2$ 
 (for instance, $(0,2)$ and $(1,0)$), so $(G,\cdot)\cong D_4$.
 \item It is abelian and the exponent is $4$, so $(G,\cdot)\cong \Z/(2)\times\Z/(4)$.
 \item The exponent is $2$, so $(G,\cdot)\cong (\Z/(2))^3$.
\end{enumerate}
If $p\neq 2$, the exponent is always $p^2$, and then
\begin{enumerate}
 \item The multiplication is commutative if and only if $\lambda=0$. Thus $(G,\cdot)\cong \Z/(p)\times\Z/(p^2)$
 when $\lambda=0$, and $(G,\cdot)\cong M_3(p)$ when $\lambda\neq 0$.
 \item The multiplication is commutative if and only if $\lambda=0$. Thus $(G,\cdot)\cong \Z/(p)\times\Z/(p^2)$
 when $\lambda=0$, and $(G,\cdot)\cong M_3(p)$ when $\lambda\neq 0$.
 \item The multiplication is commutative if and only if $\lambda=1$. Thus $(G,\cdot)\cong \Z/(p)\times\Z/(p^2)$
 when $\lambda=1$, and $(G,\cdot)\cong M_3(p)$ when $\lambda\neq 1$.
 \item The multiplication is noncommutative, so $(G,\cdot)\cong M_3(p)$.
\end{enumerate}

\subsubsection{$G/\soc(G)$ is of type (v)}

\paragraph{Case $p\neq 2$.} In this case, $\sigma$ is determined by two matrices $A$ and $B$ of order $p$ 
such that $AB=BA$ in the following way
$$
\begin{array}{cccc}
\sigma:& (G/\soc(G),\cdot)\cong (\Z/(p))^2&\longrightarrow &M\\
		&(1,0) &\mapsto &A\\
		&(0,1) &\mapsto &B\\
		&(x,y)=(1,0)^{x-C(y, 2)}(0,1)^y &\mapsto &A^{x-C(y,2)}B^y.
\end{array}
$$
On the other hand, $h$ is determined by two linearly independent vectors $h_1$ and 
$h_2$ of $(\Z/(p))^2$ such that $h_1 A=h_1$, $h_2 A=h_2$, $h_1 B=h_1+h_2$, $h_2 B=h_2$. 
Then, the multiplication is given by
$$
\begin{pmatrix}
 x_1\\
 y_1\\
\end{pmatrix}\cdot
\begin{pmatrix}
 x_2\\
 y_2\\
\end{pmatrix}:=
\begin{pmatrix}
 x_1\\
 y_1\\
\end{pmatrix}
+A^{ h_1 (x_1,y_1)^t-C(h_2 (x_1,y_1)^t,~ 2)}B^{h_2 (x_1,y_1)^t}
\begin{pmatrix}
x_2\\
y_2
\end{pmatrix}.
$$
Two of these structures are isomorphic if 
$$
A^{h_1 (x,y)^t-C(h_2 (x,y)^t,~ 2)}B^{h_2 (x,y)^t}=
(F^{-1}A'F)^{h'_1 F (x,y)^t-C(h'_2 F (x,y)^t,~ 2)}(F^{-1}B'F)^{h'_2 F (x,y)^t},
$$
for some $F\in M.$

As usual,
we can take $A$ and $B$ in $M_p$. Moreover, by the conditions on $h_1$ and $h_2$, we must have 
$A=\begin{pmatrix}
1 & 0  \\
pa & 1+pb \\
\end{pmatrix}$
and
$B=\begin{pmatrix}
1 & c'  \\
pa' & 1+pb' \\
\end{pmatrix},$
$c'\neq 0$, and $h_1=(\alpha,\beta)$, $h_2=(0,c'\alpha)$. But these two matrices only commute when $a=0$.
Using $F=\begin{pmatrix} 
c' & 0\\
p b' & c'
\end{pmatrix}$, we can take $b'=0$, and using then 
$F=\begin{pmatrix}
c' & 0\\
0 & 1
\end{pmatrix}
$, we can take $c'=1$. Then, using 
$F=\begin{pmatrix}
\alpha^{-1} & -\beta\alpha^{-2}\\
-pa'\beta\alpha^{-2} & \alpha^{-1}
\end{pmatrix}$, we can take $\alpha=1$, $\beta=0$.

So $A=\begin{pmatrix}
1 & 0  \\
0 & 1+pb \\
\end{pmatrix}$
and
$B=\begin{pmatrix}
1 & 1  \\
pa & 1 \\
\end{pmatrix},$
 and $h_1=(1,0)$, $h_2=(0,1).$ Hence
$$
\begin{pmatrix}
 x_1\\
 y_1\\
\end{pmatrix}\cdot
\begin{pmatrix}
 x_2\\
 y_2\\
\end{pmatrix}=
\begin{pmatrix}
 x_1\\
 y_1\\
\end{pmatrix}
+A^{x_1-C(y_1, 2)}B^{y_1}\begin{pmatrix}x_2\\y_2\end{pmatrix}
=\begin{pmatrix}
  x_1\\
  y_1\\
 \end{pmatrix}
+
\begin{pmatrix}
1 & y_1\\
pa y_1 & 1+pbx_1+p(a-b)C(y_1, 2)
\end{pmatrix}
\begin{pmatrix}x_2\\y_2\end{pmatrix}.
$$

Let's see if some of these braces are isomorphic. If 
$F=\begin{pmatrix}
X & Y\\
pZ & T
\end{pmatrix},
$
$A=\begin{pmatrix}
1 & 0\\
0 & 1+pb
\end{pmatrix},
$
$B=\begin{pmatrix}
1 & 1\\
pa & 1
\end{pmatrix},
$
$A'=\begin{pmatrix}
1 & 0\\
0 & 1+pb'
\end{pmatrix},
$
$B'=\begin{pmatrix}
1 & 1\\
pa' & 1
\end{pmatrix},
$
\vspace{5pt}
$$
FA^{x-C(y, 2)}B^{y}=
A'^{(1,0) F (x,y)^t-C((0,1) F (x,y)^t, 2)}B'^{(0,1) F (x,y)^t}F=
A'^{Xx+Yy-C(Ty, 2)}B'^{Ty}F,
$$

$$
FA^{x-C(y, 2)}B^{y}=
\begin{pmatrix}
X & Xy+Y\\
pZ+payT & T+p(a-b)T C(y, 2)+pbxT+pZy
\end{pmatrix},
$$

$$
A'^{Xx+Yy-C(Ty,2)}B'^{Ty}F=
\begin{pmatrix}
X & T^2y+Y\\
pZ+pa'TXy & T+pb'T(Xx+Yy-C(Ty, 2))+pa'TC(Ty, 2)+pa'TYy
\end{pmatrix}.
$$
This gives the relations 
$$
T^2=X,
$$
$$
a=a'X,
$$
$$
b=b'X,
$$
$$
(a'+b')TY+(b'-a')\frac{T^2}{2}=Z+(b-a)\frac{T}{2},
$$
$$
(a-b)=(a'-b')T^2.
$$

The fifth equation is a combination of the first three equations.
So taking $b'=1$ when $b$ is a square or $b'=\varepsilon$ when $b$ is not a square, and 
defining $Y=0$, $T=\sqrt{b/b'}$, $X=b/b'$, $Z=(b-a)\frac{1-T}{2}$,
we obtain that the non-isomorphic case are $b=1$ or $\varepsilon$, and any $a\in\{0,1,\dots,p-1\}$.
Thus the multiplications are given by
$$
\begin{pmatrix}
 x_1\\
 y_1\\
\end{pmatrix}\cdot
\begin{pmatrix}
 x_2\\
 y_2\\
\end{pmatrix}:=
\begin{pmatrix}
  x_1\\
  y_1\\
 \end{pmatrix}
+
\begin{pmatrix}
1 & y_1\\
pa y_1 & 1+px_1+p(a-1)C(y_1, 2)
\end{pmatrix}
\begin{pmatrix}x_2\\y_2\end{pmatrix},
$$
\vspace{4pt}
$$
\begin{pmatrix}
 x_1\\
 y_1\\
\end{pmatrix}\cdot
\begin{pmatrix}
 x_2\\
 y_2\\
\end{pmatrix}:=
\begin{pmatrix}
  x_1\\
  y_1\\
 \end{pmatrix}
+
\begin{pmatrix}
1 & y_1\\
pa y_1 & 1+p\varepsilon x_1+p(a-\varepsilon)C(y_1, 2)
\end{pmatrix}
\begin{pmatrix}x_2\\y_2\end{pmatrix}.
$$
Observe that the first multiplication is commutative if and only if $a=1$, and 
the second one is commutative if and only if $a=\varepsilon$.

When $p\neq 3$, the exponent of $(G,\cdot)$ 
is $p^2$ in both cases. In the first case, 
 $(G,\cdot)$ is isomorphic to 
$\Z/(p)\times\Z/(p^2)$ when $a=1$, and to $M_3(p)$ if $a\neq 1$.
In the second case,
$(G,\cdot)$ is isomorphic to 
$\Z/(p)\times\Z/(p^2)$ when $a=\varepsilon$, and to $M_3(p)$ if $a\neq \varepsilon$.

When $p=3$, we find different groups because the exponent is not always $p^2$. 
In the first case, $(G,\cdot)$ is isomorphic to $\Z/(3)\times\Z/(9)$ when $a=1$, to 
$M(3)$ when $a=-1$, and to $M_3(3)$ if $a=0$. In the second case, $\varepsilon$ must be equal to $-1$,
so $(G,\cdot)$ is isomorphic to $M_3(3)$ when $a=0$ or $1$, and to $(\Z/(3))^3$ if $a=-1$

\paragraph{Case $p=2$.} In this case, $(G/\soc(G),\cdot)\cong \Z/(4)$ and $(G/\soc(G),+)\cong \Z/(2)\times\Z/(2)$.
First of all, we need a matrix $A$ of order 4, that we may take in $M_p$.
The only matrices of order $4$ of this group are 
$\begin{pmatrix}
1 & 1 \\
2 & 1 \\
\end{pmatrix}$
and
$\begin{pmatrix}
1 & 1 \\
2 & 3 \\
\end{pmatrix}$, and these two matrices are conjugate by 
$F=\begin{pmatrix}
1 & 0 \\
2 & 1 \\
\end{pmatrix}$. So we can take 
$A=\begin{pmatrix}
1 & 1 \\
2 & 1 \\
\end{pmatrix}$.

On the other hand, we need two elements $h_1$ and $h_2$ of $\Z/(2)\times\Z/(2)$ such that $h_1 A=h_1+h_2$ and 
$h_2 A=h_2$. Therefore we have to take $h_2=(0,1)$ and $h_1=(1,0)$ or $(1,1)$. But using 
$F=\begin{pmatrix}
    1 & 1\\
    2 & 1\\
   \end{pmatrix}$, we may assume $h_1=(1,0)$.

In conclusion, there is only a brace with these characteristics up to isomorphism, with multiplication
$$
\begin{pmatrix}
 x_1\\
 y_1\\
\end{pmatrix}\cdot
\begin{pmatrix}
 x_2\\
 y_2\\
\end{pmatrix}:=
\begin{pmatrix}
 x_1\\
 y_1\\
\end{pmatrix}+
\begin{pmatrix}
1 & y_1 \\
2y_1 & 1+2x_1 \\
\end{pmatrix}
\begin{pmatrix}
x_2 \\
y_2 \\
\end{pmatrix}.
$$

We have $(G,\cdot)\cong \Z/(2)\times\Z/(4)$ because the multiplication is commutative, and there are elements
of order $4$, like $(0,1)$, but not of order $8$.

\subsubsection{$G/\soc(G)$ is of type (ii) and (iii)}

\paragraph{Case $p\neq 2$:} It cannot happen because there are no matrices of order $p^2$ in $M_p$.

\paragraph{Case $p=2$:} 

\paragraph{$G/Soc(G)$ is of type (ii).}
It cannot happen because there are no matrices $A$ of order 4 in $M_p$ and no elements $\alpha$, $\beta$ in $\Z/(4)$ 
such that $(\alpha,\beta)A=(\alpha,\beta)$ and $h(x,y)=\alpha x+\beta y$, $x\in\Z/(2)$, $y\in\Z/(4)$, 
is a surjective morphism to $\Z/(4).$

\paragraph{$G/Soc(G)$ is of type (iii).}
The bijective correspondence between the additive group and the multiplicative group of $G/\soc(G)$ is
$$
\pi: \Z/(4)\to\Z/(2)\times\Z/(2)
$$
$$
0\mapsto (0,0), 
$$
$$
1\mapsto (1,0), 
$$
$$
2\mapsto (0,1), 
$$
$$
3\mapsto (1,1),
$$
which can be written has
$$
\pi(z)=\left(z,~\sum_{i=1}^{z-1} i\right).
$$

Since $(G/\soc(G),\cdot)\cong \Z/(2)\times\Z/(2)$, we need two commuting matrices $A$ and $B$
of order 2. As before, we may take 
$A=\begin{pmatrix}
1 & c\\
2a & 1+2b
\end{pmatrix}$ and 
$B=\begin{pmatrix}
1 & c'\\
2a' & 1+2b'
\end{pmatrix}$. 
Also, we need a surjective morphism $h:\Z/(2)\times\Z/(4)\to\Z/(4)$. It is determined 
by $2\alpha=h(1,0)$ (which has to have order $2$) and $\beta=h(0,1)$, $2\alpha,\beta\in\Z/(4)$. 
The condition $h(\sigma(g)(m))=\lambda_g(h(m))$
is equivalent to 
$$(2\alpha,~\beta)A\begin{pmatrix}x\\y\end{pmatrix}=3(2\alpha,~\beta)\begin{pmatrix}x\\y\end{pmatrix}, \text{ and }$$ 
$$(2\alpha,~\beta)B\begin{pmatrix}x\\y\end{pmatrix}=(2\alpha,~\beta)\begin{pmatrix}x\\y\end{pmatrix},$$ 
which are equivalent to $a=a'=0$, $\alpha c+\beta b\equiv\beta\pmod{2}$, 
$\alpha c'+\beta b'\equiv 0\pmod{2}$. Using $\beta\equiv 1\pmod{2}$ and giving values to $\alpha$, we obtain four possible cases
$$
\alpha=0,~(b,c)=(1,0),~(b',c')=(0,1),~
A=\begin{pmatrix}
1& 0\\
0& 3\\
\end{pmatrix},~
B=\begin{pmatrix}
1& 1\\
0& 1\\
\end{pmatrix};
$$
$$
\alpha=0,~(b,c)=(1,1),~(b',c')=(0,1),~
A=\begin{pmatrix}
1& 1\\
0& 3\\
\end{pmatrix},~
B=\begin{pmatrix}
1& 1\\
0& 1\\
\end{pmatrix}; 
$$
$$
\alpha=1,~(b,c)=(0,1),~(b',c')=(1,1),~
A=\begin{pmatrix}
1& 1\\
0& 1\\
\end{pmatrix},~
B=\begin{pmatrix}
1& 1\\
0& 3\\
\end{pmatrix}; 
$$
$$
\alpha=1,~(b,c)=(1,0),~(b',c')=(1,1),~
A=\begin{pmatrix}
1& 0\\
0& 3\\
\end{pmatrix},~
B=\begin{pmatrix}
1& 1\\
0& 3\\
\end{pmatrix}. 
$$
First of all, we can turn $\beta$ into 1 without changing the matrices using 
$F=\begin{pmatrix}
1& 0\\
0& \beta^{-1}\\
\end{pmatrix}$. Then, the first and the fourth cases are conjugate, using 
$F=\begin{pmatrix}
1 & 0\\
2 & 1
\end{pmatrix}$. The second and the third cases are conjugate, using 
$F=\begin{pmatrix}
1 & 0\\
2 & 1
\end{pmatrix}$. And this two cases are isomorphic by 
$F=\begin{pmatrix}
1 & 0\\
0 & 3
\end{pmatrix}$ because
$$
A^{3y}B^{\sum_{i=1}^{3y-1} i}F=\begin{pmatrix}1& \sum^{3y-1}_{i=1} i\\ 0& 3+2y\end{pmatrix}=
\begin{pmatrix}1& \sum^{y}_{i=1} i\\ 0& 3+2y\end{pmatrix},
$$
$$
FA^yB^{\sum_{i=1}^{y-1} i}=\begin{pmatrix}1& y+\sum^{y-1}_{i=1} i\\ 0& 3+2y\end{pmatrix}.
$$

Thus, the only multiplication up to isomorphism is
\begin{eqnarray*}
\begin{pmatrix}
 x_1\\
 y_1\\
\end{pmatrix}\cdot
\begin{pmatrix}
 x_2\\
 y_2\\
\end{pmatrix}&=&
\begin{pmatrix}
 x_1\\
 y_1\\
\end{pmatrix}+
\sigma(h(x_1,y_1))(x_2,y_2)=
\begin{pmatrix}
 x_1\\
 y_1\\
\end{pmatrix}+
\sigma(y_1)(x_2,y_2)=
\begin{pmatrix}
 x_1\\
 y_1\\
\end{pmatrix}+
A^{y_1} B^{\sum_{i=1}^{y_1-1} i}
\begin{pmatrix}
x_2\\
y_2
\end{pmatrix}\\[7pt]
&=&
\begin{pmatrix}
 x_1\\
 y_1\\
\end{pmatrix}+
\begin{pmatrix}
1 & \displaystyle\sum_{i=1}^{y_1-1} i\\
0 & 1+2y_1
\end{pmatrix}
\begin{pmatrix}
x_2\\
y_2
\end{pmatrix}.
\end{eqnarray*}

In this case, $(G,\cdot)\cong D_4$ because the multiplication is noncommutative, and there are more than one element 
of order $2$ (for example, $(0,1)$ and $(1,0)$).

\subsection{Trivial socle}
As before, when the socle is trivial, the lambda map $\lambda:(G,\cdot)\to M_p$ becomes 
an isomorphism. Thus we are done if we could find a bijective map $\pi:M_p\to \Z/(p)\times \Z/(p^2)$ such that 
$\pi(AB)=\pi(A)+A\pi(B)$ for all $A,B\in M_p$

For $p\neq 2,3$, suppose that the matrix 
$A=\begin{pmatrix}
1 & c \\
p a & 1+p b 
\end{pmatrix}$ corresponds to the vector $v=\begin{pmatrix}x\\y\end{pmatrix}$, $x\in\Z/(p)$, $y\in\Z/(p^2)$.
Since $p\neq 2$, the matrix $A$ has order $p$, so $0=v^p=v+\lambda_v(v)+\lambda^2_v(v)+\cdots +\lambda^{p-1}_v(v)=
(\Id+A+A^2+\cdots +A^{p-1}) v$. Using induction, we obtain 
$A^n=\begin{pmatrix}
1 & nc \\
p na & 1+p nb+pC(n, 2) ac 
\end{pmatrix}$, and 
$\Id+A+A^2+\cdots +A^{p-1}=
\begin{pmatrix}
p & C(p, 2)c \\
p C(p, 2)a & p+pC(p, 2) b+ p\sum^{p-1}_{i=2}C(i, 2)ac
\end{pmatrix}=
\begin{pmatrix}
0 & 0 \\
0 & p
\end{pmatrix}
$
because $C(p, 2)\equiv 0\pmod{p}$ for $p\neq 2$ and $\sum^{p-1}_{i=2}C(i, 2)=p\frac{(p-1)(p-2)}{6}\equiv 0\pmod{p}$ 
if $p\neq 2,3$. In conclusion, any element $v$ must satisfy 
$0=\begin{pmatrix}
0 & 0 \\
0 & p
\end{pmatrix} v=
\begin{pmatrix}
0\\
p y
\end{pmatrix},
$
or, equivalently, $y\equiv 0\pmod{p},$ which is a contradiction.

For $p=3$, we obtain $\Id+A+\cdots+A^{p-1}=\Id+A+A^2=
\begin{pmatrix}
0 & 0 \\
0 & 3+3ac
\end{pmatrix}$. When $a=0$ or $c=0$, just like before, we need that $y\equiv 0\pmod{3}$. 
But there are 17 matrices with $a=0$ or $c=0$, and just 9 vectors with $y\equiv 0\pmod{3}$,
so we have less vectors than matrices to assign.

For $p=2$, any matrix $A$ satisfies 
$$
\Id+A=\begin{pmatrix}
0 & c\\
2a & 2+2b\\
\end{pmatrix},
$$
$$
\Id+A+A^2+A^3=0.
$$
Then, the conditions $(\Id+A)\pi(A)=0$ for matrices of order 2, and $(\Id+A)\pi(A)\neq 0$ for matrices of order 4,
 give the following necessary conditions:
\begin{itemize}
\item $(0,2)$ cannot be assigned to 
$\begin{pmatrix}
1 & 1\\
2 & 1\\
\end{pmatrix}$
nor
$\begin{pmatrix}
1 & 1\\
2 & 3\\
\end{pmatrix}$.
\item $\begin{pmatrix}
1 & 0\\
0 & 3\\
\end{pmatrix}$
has to be assigned to $(1,0)$, $(0,2)$ or $(1,2)$.
\item $\begin{pmatrix}
1 & 0\\
2 & 1\\
\end{pmatrix}$
has to be assigned to $(1,1)$, $(0,2)$ or $(1,3)$.
\item $\begin{pmatrix}
1 & 0\\
2 & 3\\
\end{pmatrix}$
has to be assigned to $(0,1)$, $(0,2)$ or $(0,3)$.
\item $\begin{pmatrix}
1 & 1\\
0 & 1\\
\end{pmatrix}$
has to be assigned to  $(1,0)$, $(0,2)$ or $(1,2)$.
\item $\pi\begin{pmatrix}
1 & 1\\
0 & 3\\
\end{pmatrix}=
\pi\left(\begin{pmatrix}
1 & 1\\
0 & 1
\end{pmatrix}
\begin{pmatrix}
1 & 0\\
0 & 3
\end{pmatrix}\right)=
\pi\begin{pmatrix}
1 & 1\\
0 & 1
\end{pmatrix}+
\begin{pmatrix}
1 & 1\\
0 & 1
\end{pmatrix}
\pi\begin{pmatrix}
1 & 0\\
0 & 3
\end{pmatrix}$ has to be equal to $(1,0)$, $(0,2)$ or $(1,2)$.
\end{itemize}
These conditions are summarize in the following diagram

$$
\left\lbrace(0,2),~(1,0),~(1,2)\right\rbrace\longleftrightarrow\left\lbrace
\begin{pmatrix}
1 & 1 \\
0 & 1
\end{pmatrix},~
\begin{pmatrix}
1 & 1 \\
0 & 3
\end{pmatrix},~
\begin{pmatrix}
1 & 0 \\
0 & 3
\end{pmatrix}\right\rbrace,
$$

$$
\left\lbrace(0,1),~(1,1),~(0,3),~(1,3)\right\rbrace\longleftrightarrow\left\lbrace
\begin{pmatrix}
1 & 1 \\
2 & 1
\end{pmatrix},~
\begin{pmatrix}
1 & 1 \\
2 & 3
\end{pmatrix},~
\begin{pmatrix}
1 & 0 \\
2 & 1
\end{pmatrix},~
\begin{pmatrix}
1 & 0 \\
2 & 3
\end{pmatrix}\right\rbrace,
$$
and $\begin{pmatrix}
1 & 0 \\
2 & 1
\end{pmatrix}$ goes to $(1,3)$ or $(1,1)$, and 
$\begin{pmatrix}
1 & 0 \\
2 & 3
\end{pmatrix}$,
 to $(0,3)$ or $(0,1)$.

When we assign a vector to $\begin{pmatrix}
1 & 1 \\
2 & 1
\end{pmatrix}
$, we have finished, because
$
\begin{pmatrix}
1 & 0 \\
0 & 3
\end{pmatrix}
$ and 
$
\begin{pmatrix}
1 & 1 \\
2 & 3
\end{pmatrix}
$
are powers of it, 
$
\begin{pmatrix}
1 & 0 \\
2 & 1
\end{pmatrix}
$
and
$
\begin{pmatrix}
1 & 0 \\
2 & 3
\end{pmatrix}
$
takes the elements of $\Z/(p)\times\Z/(p^2)$ that remains unassigned,  
and  $
\pi\begin{pmatrix}
1 & 1 \\
0 & 1
\end{pmatrix}$ and $
\pi\begin{pmatrix}
1 & 1 \\
0 & 3
\end{pmatrix}$ can be computed as

$$\pi\begin{pmatrix}
1 & 1 \\
0 & 1
\end{pmatrix}=
\pi\left(\begin{pmatrix}
1 & 1 \\
2 & 1
\end{pmatrix}
\begin{pmatrix}
1 & 0 \\
2 & 1
\end{pmatrix}\right)=
\pi\begin{pmatrix}
1 & 1 \\
2 & 1
\end{pmatrix}+
\begin{pmatrix}
1 & 1 \\
2 & 1
\end{pmatrix}
\pi\begin{pmatrix}
1 & 0 \\
2 & 1
\end{pmatrix},
$$
and
$$
\pi\begin{pmatrix}
1 & 1 \\
0 & 3
\end{pmatrix}=
\pi\left(\begin{pmatrix}
1 & 0 \\
2 & 1
\end{pmatrix}
\begin{pmatrix}
1 & 1 \\
2 & 1
\end{pmatrix}\right)=
\pi\begin{pmatrix}
1 & 0 \\
2 & 1
\end{pmatrix}+
\begin{pmatrix}
1 & 0 \\
2 & 1
\end{pmatrix}
\pi\begin{pmatrix}
1 & 1 \\
2 & 1
\end{pmatrix}.
$$

One of this possible assignations gives $\pi$ equal to
$$
\begin{array}{cc}
(0,0)\mapsto
\begin{pmatrix}
1 & 0 \\
0 & 1
\end{pmatrix}, &
(1,3)\mapsto
\begin{pmatrix}
1 & 0 \\
2 & 1
\end{pmatrix},
\vspace{7pt}\\
(0,1)\mapsto
\begin{pmatrix}
1 & 1 \\
2 & 1
\end{pmatrix}, &
(0,2)\mapsto
\begin{pmatrix}
1 & 1 \\
0 & 1
\end{pmatrix},
\vspace{7pt}\\
(1,2)\mapsto
\begin{pmatrix}
1 & 0 \\
0 & 3
\end{pmatrix}, &
(1,0)\mapsto
\begin{pmatrix}
1 & 1 \\
0 & 3
\end{pmatrix},
\vspace{7pt}\\
(1,1)\mapsto
\begin{pmatrix}
1 & 1 \\
2 & 3
\end{pmatrix}, &
(0,3)\mapsto
\begin{pmatrix}
1 & 0 \\
2 & 3
\end{pmatrix}.\\
\end{array}
$$
After some computations, we get
$$
\pi\left(
\begin{pmatrix}
1& c\\
2a& 1+2b
\end{pmatrix}\right)=
\begin{pmatrix}
a+b+c+ac\\
a+2\left(a+b+c+ab+\displaystyle\sum_{i=1}^{a-1} i\right)
\end{pmatrix},
$$
and then it is straightforward to check that $\pi(AB)=\pi(A)+A\pi(B)$ for all $A,B\in M_p$.

The other cases are isomorphic to this one by the morphisms $F_i:G_1\to G_i$, where
\begin{enumerate}[1.]
\setcounter{enumi}{1}
\item $G_2$ is obtained with the assignation 
$\pi\begin{pmatrix}
1 & 1 \\
2 & 1 
\end{pmatrix}=(1,1).$ $
F_2$ is equal to $\begin{pmatrix}
1 & 1 \\
2 & 1 
\end{pmatrix}$ as a morphism of the additive groups, and equal to
the conjugation by $\begin{pmatrix}
1 & 1 \\
2 & 1 
\end{pmatrix}
$ as a morphism of the multiplicative groups;
\item $G_3$ is obtained with the assignation 
$\pi\begin{pmatrix}
1 & 1 \\
2 & 1 
\end{pmatrix}=(0,3).$ $
F_3$ is equal to $\begin{pmatrix}
1 & 0 \\
0 & 3 
\end{pmatrix}$ as a morphism of the additive groups, and equal to
the identity as a morphism of the multiplicative groups;
\item $G_4$ is obtained with the assignation 
$\pi\begin{pmatrix}
1 & 1 \\
2 & 1 
\end{pmatrix}=(1,3).$
$
F_4$ is equal to $\begin{pmatrix}
1 & 1 \\
2 & 3 
\end{pmatrix}$ as a morphism of the additive groups, and equal to
the conjugation by $\begin{pmatrix}
1 & 1 \\
2 & 1 
\end{pmatrix}
$ as a morphism of the multiplicative groups.
\end{enumerate}

\section*{Acknowledgments}
 Research partially supported by DGI MINECO MTM2011-28992-C02-01, by
FEDER UNAB10-4E-378 ``Una ma\-ne\-ra de hacer Europa'', and by the Comissionat
per Universitats i Recerca de la Generalitat de Catalunya.

\vspace{30pt}
 \noindent \begin{tabular}{llllllll}
 D. Bachiller \\
 Departament de Matem\`atiques \\
 Universitat Aut\`onoma de Barcelona  \\
08193 Bellaterra (Barcelona), Spain  \\
dbachiller@mat.uab.cat 
\end{tabular}


\begin{thebibliography}{99}
\bibitem{NirBenDavid} N. Ben David, On groups of central type and
involutive Yang-Baxter groups: a cohomological approach, Ph.D.
thesis, The Technion-Israel Institute of Technology, Haifa, 2012.
\bibitem{Cedo}
F. {Ced\'o}, E. {Jespers} and J. {Okni\'nski}: \textit{{Braces and the Yang-Baxter equation}}, 
Comm. Math. Physics {\bf 327} (2014), 101--116.
\bibitem{Rump} W. Rump, \textit{Classification of cyclic braces},
Journal of Pure and Applied Algebra {\bf 209} (2007), 671--685.
\bibitem{Gorenstein}
D. Gorenstein: Finite groups. Chelsea Pub. Co., New York (1980).
\bibitem{Rump1}
W. Rump: \textit{Braces, radical rings, and the quantum {Y}ang-{B}axter equation}, J. Algebra, {\bf 307} (2007), 153-170.
\end{thebibliography}
\end{document}